% FILE dhv20160219.tex
% Uses pictures:

%:::::::::::::::::::::::::::::::::::::::::::::::::::::::::::::::::::::::::::::::::
\documentclass[12pt,a4paper]{amsart}
\usepackage{amssymb,color}

\usepackage[english]{babel}          % or whatever
\usepackage{verbatim}
\usepackage[T1]{fontenc}
% Or whatever. Note that the encoding and the font should match.

\usepackage{epstopdf}

% If T1 does not look nice, try deleting the line with the fontenc.
%\usepackage{amssymb,amsthm,amsmath,amsrefs,amsfonts}
\usepackage{floatflt,graphicx}
\usepackage{color,xcolor}
\usepackage{enumerate}
\usepackage{animate}
\usepackage{amsmath}
\usepackage{amsthm}

%%%%%%%%%%%% METHOD FOR HOUR AND MINUTE %%%%%%%%%%%%%
\newcounter{minutes}\setcounter{minutes}{\time}
\divide\time by 60
\newcounter{hours}\setcounter{hours}{\time}
\multiply\time by 60 \addtocounter{minutes}{-\time}
%%%%%%%%%%%%%%%%%%%%%%%%%%%%%%%%%%%%%%%%%%%%%%%%%%%%%
%***************************************************************************
\title{comparison theorems for hyperbolic type metrics}

\author{Oleksiy Dovgoshey}
\address{
National Academy of Sciences of Ukraine, Institute of Mathema\-tics,
Tereschenkivska str. 3, Kiev - 4, 01601, Ukraine
}
\curraddr{}
\email{aleksdov@mail.ru}

\author{Parisa Hariri}
\address{Department of Mathematics and Statistics,
  University of Turku, Turku, Finland}
\curraddr{}
\email{parisa.hariri@utu.fi}

\author{Matti Vuorinen}
\address{Department of Mathematics and Statistics,
  University of Turku, Turku, Finland}
\email{vuorinen@utu.fi}

%\thanks{The research of the first author was supported by a grant received from the Finnish Academy of Science and Letters and also as a part of EUMLS %project with grant agreement PIRSES $-$ GA $-$ 2011 $-$ 295164}

\keywords{quasihyperbolic metric, distance ratio metric, bilipschitz condition, quasiconformal mapping, uniform domain}

%    \subjclass is required.
\subjclass[2010]{51M10, 30C65}

\date{}

\dedicatory{}

%    "Communicated by" -- provide editor's name; required.
\commby{}

%    Abstract is required.
%***************************************************************************
%\theoremstyle{definition}
%\newtheorem{definition}{Definition}[section]

%\theoremstyle{plain}
%\newtheorem{thm}{Theorem}[section]

%\theoremstyle{plain}
%\newtheorem{lem}{Lemma}[section]

%\theoremstyle{plain}
%\newtheorem{cor}{Corollary}[section]

%\theoremstyle{remark}
%\newtheorem{example}{Example}[section]

%\theoremstyle{remark}
%\newtheorem{remark}{Remark}[section]
%\theoremstyle{plain}
%\newtheorem{res}{Result}[section]
%***************************************************************************
%\renewcommand{\theequation}{\thesection.\arabic{equation}}
%***************************************************************************
%=========================================================================
\theoremstyle{plain}
\newtheorem{thm}[equation]{Theorem}
\newtheorem{cor}[equation]{Corollary}
\newtheorem{lem}[equation]{Lemma}

\newtheorem{prop}[equation]{Proposition}

\theoremstyle{definition}
\newtheorem{defn}[equation]{Definition}

\theoremstyle{remark}
\newtheorem{rem}[equation]{Remark}

\newtheorem{nonsec}[equation]{}

\numberwithin{equation}{section}

\DeclareMathOperator{\dist}{dist}

\unitlength=1cm

\newcommand{\beq}{\begin{equation}}
\newcommand{\eeq}{\end{equation}}

\newcommand{\bequu}{\begin{eqnarray*}}
\newcommand{\eequu}{\end{eqnarray*}}

\newcommand{\bequ}{\begin{eqnarray}}
\newcommand{\eequ}{\end{eqnarray}}

\newcommand{\R}{\mathbb{R}^2}

\newcommand{\Hn}{ {\mathbb{H}^n} }
\newcommand{\Bn}{ {\mathbb{B}^n} }
\newcommand{\Rn}{ {\mathbb{R}^n} }

%==========================================================================

\newcommand{\arch}{\,\textnormal{arch}}
\newcommand{\sh}{\,\textnormal{sh}}
\newcommand{\ch}{\,\textnormal{ch}}
\renewcommand{\th}{\,\textnormal{th}}

%==========================================================================

\begin{document}
\def\thefootnote{}
\footnotetext{ \texttt{File:~\jobname .tex,
           printed: \number\year-\number\month-\number\day,
           \thehours.\ifnum\theminutes<10{0}\fi\theminutes}
} \makeatletter\def\thefootnote{\@arabic\c@footnote}\makeatother

\begin{abstract}
The connection between several hyperbolic type metrics is studied in subdomains of the Euclidean space.
In particular, a new metric is introduced and compared to the distance ratio metric.
\end{abstract}

\maketitle

%    Text of article.

%%%%%%%%%%%%%%%%%%%%%%%%%%%%%%%%%%%%%%%%%%%%%%%%%%%%%%%%%%%%%%%%%%%%
%%%%%%%%%%%%%%%%%%%%%%%%%%%%%%%%%%%%%%%%%%%%%%%%%%%%%%%%%%%%%%%%%%%%%
%%%%%%%%%%%%%%%%%%%%%%%%%%%%%%%%%%%%%%%%%%%%%%%%%%%%%%%%%%%%%%%%%%%%%%

\section{Introduction}\label{section1}

%%%%%%%%%%%%%%%%%%%%%%%%%%%%%%%%%%%%%%%%%%%%%%%%%%%%%%%%%%%%%%%%%%%%%%%%
%%%%%%%%%%%%%%%%%%%%%%%%%%%%%%%%%%%%%%%%%%%%%%%%%%%%%%%%%%%%%%%%%%%%%
%%%%%%%%%%%%%%%%%%%%%%%%%%%%%%%%%%%%%%%%%%%%%%%%%%%%%%%%%%%%%%%%%%%%%
\setcounter{equation}{0}

\begin{comment}
In addition to the Euclidean metric, several other metrics have a significant role in the geometric theory of functions in
plane and space \cite{gh,vu}.
\end{comment}

The notion of a metric, introduced by M. Fr\'echet in his thesis in 1906,
has become a basic tool in many areas of mathematics  \cite{dd}.  In geometric function theory, numerous metrics are extensively used in addition
to the Euclidean and  hyperbolic distances. Recently also metrics of hyperbolic type have become standard
tools in various areas of metric geometry. One of these metrics is the quasihyperbolic
metric of a domain $G\subsetneq \Rn\,.$ Although this metric has numerous applications, many of its basic properties
are still open. For instance, questions about the convexity of balls and properties of geodesics have been studied
by several people very recently \cite{mv, rt, krt}.
Given two points  $x,y\in G$ it is usually impossible to compute the quasihyperbolic distance between
them. Therefore various estimates in terms of quantities that are simple to compute are needed.
Some of the metrics that provide either upper or lower bounds for the quasihyperbolic
metric are the distance ratio metric, the triangular ratio metric and
the visual angle metric  \cite{chkv, himps, klvw, hvw}.

In this paper we study a generalization of a metric introduced by P. H\"ast\"o \cite{h}
in the one-dimensional case.  We also find estimates for it in terms of the aforementioned metrics.
Our main result is the following theorem.

%%%%%%%Thm1.1

\begin{thm}\label{th4}
Let $D$ be a nonempty open set in a metric space $(X, \rho)$ and let $\partial D \neq\varnothing$. Then the function
$$
h_{D,c}(x,y) = \log\left(1+c\frac{\rho(x,y)}{\sqrt{d_D(x)d_D(y)}}\right),
$$ where $d_{D}(x)=\dist (x, \partial D),$
is a metric for every $c \geqslant 2$. The constant $2$ is best possible here.
\end{thm}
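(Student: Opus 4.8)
The plan is to check the metric axioms, the only nontrivial one being the triangle inequality. Nonnegativity, the identity of indiscernibles, and symmetry are immediate: since $D$ is open and $\partial D\neq\varnothing$, for every $x\in D$ the closed set $\partial D$ is nonempty and does not meet $D$, so $d_D(x)\in(0,\infty)$; hence the argument of the logarithm is $\geq 1$, equal to $1$ exactly when $\rho(x,y)=0$, i.e.\ $x=y$, and the expression is visibly symmetric in $x,y$.

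For the triangle inequality, fix $x,y,z\in D$ and abbreviate $a=d_D(x)$, $b=d_D(y)$, $e=d_D(z)$, $p=\rho(x,y)$, $q=\rho(y,z)$, $r=\rho(x,z)$. Exponentiating, $h_{D,c}(x,z)\leq h_{D,c}(x,y)+h_{D,c}(y,z)$ is equivalent to
\[
1+c\frac{r}{\sqrt{ae}}\ \leq\ \Bigl(1+c\frac{p}{\sqrt{ab}}\Bigr)\Bigl(1+c\frac{q}{\sqrt{be}}\Bigr).
\]
Expanding the right-hand side (note $\sqrt{ab}\,\sqrt{be}=b\sqrt{ae}$), cancelling the $1$'s, dividing by $c$ and multiplying by $\sqrt{ae}$ turns this into
\[
r\ \leq\ p\sqrt{\tfrac{e}{b}}+q\sqrt{\tfrac{a}{b}}+c\,\frac{pq}{b}.
\]
Now I invoke two elementary facts: the triangle inequality $r\leq p+q$ in $(X,\rho)$, and the $1$-Lipschitz property of $w\mapsto d_D(w)$, which gives $|a-b|\leq p$ and $|b-e|\leq q$. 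Thus it suffices to prove $p+q\leq p\sqrt{e/b}+q\sqrt{a/b}+cpq/b$, that is,
\[
p\Bigl(1-\sqrt{\tfrac{e}{b}}\Bigr)+q\Bigl(1-\sqrt{\tfrac{a}{b}}\Bigr)\ \leq\ c\,\frac{pq}{b}.
\]
Writing $1-\sqrt{e/b}=\dfrac{b-e}{\sqrt{b}\,(\sqrt{b}+\sqrt{e})}$ and using $b-e\leq q$ with $\sqrt{b}(\sqrt{b}+\sqrt{e})\geq b$ gives $1-\sqrt{e/b}\leq q/b$ (trivially if $e\geq b$), and symmetrically $1-\sqrt{a/b}\leq p/b$. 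Hence the left-hand side is at most $2pq/b$, which is $\leq cpq/b$ exactly when $c\geq 2$. This chain of reductions — especially seeing that the "bad" terms $1-\sqrt{e/b}$, $1-\sqrt{a/b}$ are each controlled by $q/b$, $p/b$ — is the heart of the argument; the remaining constant $2$ drops out of it automatically.

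For sharpness I would produce, for each $c<2$, an explicit failure of the triangle inequality. Take $X=\mathbb{R}$ with the Euclidean metric and $D=(0,2)$, so $d_D(t)=\min\{t,2-t\}$, $\partial D=\{0,2\}$. For small $\varepsilon>0$ set $x=\varepsilon$, $y=1$, $z=2-\varepsilon$; then $d_D(x)=d_D(z)=\varepsilon$, $d_D(y)=1$, $\rho(x,y)=\rho(y,z)=1-\varepsilon$ and $\rho(x,z)=2-2\varepsilon$. By symmetry the inequality to test is $h_{D,c}(x,z)\leq 2\,h_{D,c}(x,y)$, which after exponentiating and multiplying by $\varepsilon/c$ becomes
\[
2-2\varepsilon\ \leq\ 2\sqrt{\varepsilon}\,(1-\varepsilon)+c\,(1-\varepsilon)^2 .
\]
As $\varepsilon\to 0^{+}$ the right-hand side tends to $c$ and the left-hand side tends to $2$, so for any $c<2$ the inequality fails once $\varepsilon$ is small enough. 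Hence no constant below $2$ yields a metric, and $2$ is best possible. The only place I expect any real care is the scalar inequality in the previous paragraph; everything else is routine.
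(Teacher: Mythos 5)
Your proof is correct, and it takes a genuinely different and considerably more elementary route than the paper. The paper proves the triangle inequality in two stages: Proposition \ref{p2} reduces the general case (via the rescaling constant $k$ and the construction of an auxiliary interval realizing the relevant distances) to a one-dimensional statement about open intervals, and Lemma \ref{l3} then settles that interval case through further normalizations, an auxiliary interval $J$, the monotonicity of $x-\sqrt{x^2-x}$, and a concavity argument for the parabola $F(s^*,x)$, with a separate limiting subcase. You instead exponentiate, reduce to
\[
r\le p\sqrt{e/b}+q\sqrt{a/b}+c\,pq/b,
\]
and close the argument using only $r\le p+q$ and the $1$-Lipschitz property of $d_D$ (which is exactly the paper's Lemma \ref{l1}), via the clean bounds $1-\sqrt{e/b}\le q/b$ and $1-\sqrt{a/b}\le p/b$; these hold in every case because the left sides are nonpositive when $e\ge b$ (resp.\ $a\ge b$). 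This makes the origin of the constant $2$ transparent (it is the $2$ in $2pq/b$), avoids the dimension-one reduction entirely, and works verbatim in an arbitrary metric space. Your sharpness example (symmetric points tending to the two endpoints of an interval) is essentially the same as the paper's Remark with $D=\mathbb{B}^2$ and the points $-r,0,r$. The only thing worth adding for completeness is the one-line verification that $d_D$ is finite and strictly positive on $D$ (finiteness from $\partial D\ne\varnothing$, positivity because $\partial D$ is closed and disjoint from the open set $D$), which you implicitly use when dividing by $\sqrt{ab}$ and its companions.
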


H\"ast\"o's work \cite[Lemma 5.1]{h} also covers the case when $X=\Rn$ and $D=\Rn\setminus\{0\}$ in which case the best constant $c=1$.

The metric of Theorem \ref{th4} has its roots in the study of so called distance ratio metric or $j$-metric, see \eqref{jmetric} below, which is recurrent in the study of quasiconformal maps.
See also \cite[2.43]{vu} for a related quantity, which we will study below in Lemma \ref{set5}.

Further motivation for the study of the $h_{G,c}$-metric comes from applications to quasiconformal homeomorphisms $f:G \to f(G)$ between domains $G, f(G) \subsetneq {\mathbb R}^n\,.$ Our main applications are given in Section \ref{section4} where we show, for instance, that $h_{G,c}$-bilipschitz homeomorphisms are quasiconformal. We also
find two-sided bounds for the  $h_{G,c}$-metric in terms of the distance ratio metric. Some of the results of this paper were recently applied in \cite{na}.
%%%%%%%%%%%%%%%%%%%%%%%%%%%%%%%%%%%%%%%%%%%%%%%%%%%%%%%%%%%%%%%%%%%%
%%%%%%%%%%%%%%%%%%%%%%%%%%%%%%%%%%%%%%%%%%%%%%%%%%%%%%%%%%%%%%%%%%%%%
%%%%%%%%%%%%%%%%%%%%%%%%%%%%%%%%%%%%%%%%%%%%%%%%%%%%%%%%%%%%%%%%%%%%%%

\section{Preliminary results }\label{section 2}

%%%%%%%%%%%%%%%%%%%%%%%%%%%%%%%%%%%%%%%%%%%%%%%%%%%%%%%%%%%%%%%%%%%%%%%%
%%%%%%%%%%%%%%%%%%%%%%%%%%%%%%%%%%%%%%%%%%%%%%%%%%%%%%%%%%%%%%%%%%%%%
%%%%%%%%%%%%%%%%%%%%%%%%%%%%%%%%%%%%%%%%%%%%%%%%%%%%%%%%%%%%%%%%%%%%%

\begin{nonsec}{\bf Hyperbolic metric.}
The hyperbolic metric $\rho_{\mathbb{H}^n}$ and $\rho_{\mathbb{B}^n}$ of the upper
half space ${\mathbb{H}^n} = \{ (x_1,\ldots,x_n)\in {\mathbb{R}^n}:  x_n>0 \} $
and of the unit ball ${\mathbb{B}^n}= \{ z\in {\mathbb{R}^n}: |z|<1 \} $ can be defined as follows.
% weighted metrics with the weight functions
% $w_{\mathbb{H}^n}(x)=1/{x_n}$ and
% $w_{\mathbb{B}^n}(x)=2/(1-|x|^2)\,,$ respectively. This definition as such
%is rather abstract and for applications concrete formulas are needed.
By \cite[p.35]{b} we have for  $ x,y\in \mathbb{H}^n$
\begin{equation}\label{cro}
\ch{\rho_{\mathbb{H}^n}(x,y)}=1+\frac{|x-y|^2}{2x_ny_n},
\end{equation}
and by \cite[p.40]{b} for $x,y\in\mathbb{B}^n$
\begin{equation}\label{sro}
\sh{\frac{\rho_{\mathbb{B}^n}(x,y)}{2}}=\frac{|x-y|}{\sqrt{1-|x|^2}{\sqrt{1-|y|^2}}}\,.
\end{equation}
From \eqref{sro} we easily obtain
\begin{equation}\label{tro}
\th{\frac{\rho_{\mathbb{B}^n}(x,y)}{2}}=\frac{|x-y|}{\sqrt{|x-y|^2+(1-|x|^2)(1-|y|^2)}}\,. %\\
%&=&\frac{|x-y|}{|x||x^*-y|},\quad x^*=\frac{x}{|x|^2}.\\
\end{equation}
\end{nonsec}
It is a well-know basic fact that
for both $\mathbb{B}^n$ and $\mathbb{H}^n$ one can define the hyperbolic metric using absolute ratios, see \cite{a}, \cite{b}, \cite[(2.21)]{vu}. Because of the M\"obius invariance of the absolute ratio we may define for every M\"obius transformation $g$ the hyperbolic metric in $g(\mathbb{B}^n)$. This metric will be denoted by $\rho_{g(\mathbb{B}^n)}$. In particular, if $g: {\mathbb{B}^n}\to {\mathbb{H}^n}$ is a M\"obius transformation with $g({\mathbb{B}^n})={\mathbb{H}^n},$ then for all $x,y \in
{\mathbb{B}^n}$ there holds $\rho_{\mathbb{B}^n}(x,y)= \rho_{\mathbb{H}^n}(g(x),g(y))\,.$

%%%%%%%%%%%%%%%%%%%%%%%%%%%%%%%%%%%%%%%%%%%%%%%%%%%%%%%
\begin{nonsec}{\bf Distance ratio metric.}\label{jmetric}
For a proper nonempty open subset $D \subset {\mathbb R}^n\,$ and for all
$x,y\in D$, the  distance ratio
metric $j_D$ is defined as
$$
 j_D(x,y)=\log \left( 1+\frac{|x-y|}{\min \{d(x,\partial D),d(y, \partial D) \} } \right)\,.
$$
The distance ratio metric was introduced by F.W. Gehring and B.P. Palka
\cite{gp} and in the above simplified form in \cite{vu0}. If there is no danger of confusion, we write $d(x)= d(x,\partial D)=\dist (x, \partial D)\,.$
\begin{comment}
In addition to $j_D$\,, we also study the metric
$$
 j^*_D(x,y)= \th \frac{j_D(x,y)}{2} \,.
$$
Because $j_D$ is a metric,
it follows easily, see \cite[7.42(1)]{avv}, that $j^*_D$ is a metric, too.
\end{comment}
\end{nonsec}

\begin{comment}
%%%%%%%%%%%%%%%%%%%%%%%%%%%%%%%%%%%%%%%%%%%%%%%%%%%%%%%
\begin{nonsec}{\bf Quasihyperbolic metric. [Maybe move to a location where needed?]}
Let $G$ be a proper subdomain of ${\mathbb R}^n\,$. For all $x,\,y\in G$, the quasihyperbolic metric $k_G$ is defined as
$$k_G(x,y)=\inf_{\gamma}\int_{\gamma}\frac{1}{d(z,\partial G)}|dz|,$$
where the infimum is taken over all rectifiable arcs $\gamma$ joining $x$ to $y$ in $G$ \cite{gp}.
\end{nonsec}
\end{comment}

%%%%%%%Prop2.5

\begin{prop}\label{4}
If $c>0$ we have
\begin{enumerate}
\item
\[
\sqrt{2(\ch\rho_{\Hn}(x,y)-1)}=\frac{e^{h_{\Hn,c}(x,y)}-1}{c}\,\,, \quad  \text{for all} \, \, x,y \in \Hn \, ,
\]
\item
\[
\sh{\frac{\rho_{\mathbb{B}^n}(x,y)}{2}}\leq \frac{e^{h_{\Bn,c}(x,y)}-1}{c}\leq 2 \sh{\frac{\rho_{\mathbb{B}^n}(x,y)}{2}}\,\,,\quad  \text{for all} \, \, x,y \in \Bn \, .
\]
\end{enumerate}

\end{prop}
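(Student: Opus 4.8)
The plan is to verify both statements by direct computation, starting from the closed-form expressions for the hyperbolic metrics given in \eqref{cro}, \eqref{sro}, \eqref{tro} and the definition of $h_{D,c}$ in Theorem \ref{th4}. Throughout, note that for any domain $D$ we have
\[
\frac{e^{h_{D,c}(x,y)}-1}{c}=\frac{\rho(x,y)}{\sqrt{d_D(x)d_D(y)}},
\]
since $h_{D,c}(x,y)=\log\bigl(1+c\,\rho(x,y)/\sqrt{d_D(x)d_D(y)}\bigr)$. Thus in both parts the right-hand side is simply the ``normalized chordal quantity'' $|x-y|/\sqrt{d_D(x)d_D(y)}$, and the task reduces to comparing this with the relevant hyperbolic expression.

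For part (1), I would take $D=\Hn$, so that $d_{\Hn}(x)=x_n$ and $d_{\Hn}(y)=y_n$. Then
\[
\frac{e^{h_{\Hn,c}(x,y)}-1}{c}=\frac{|x-y|}{\sqrt{x_n y_n}},
\]
while \eqref{cro} gives $\ch\rho_{\Hn}(x,y)-1=\tfrac{|x-y|^2}{2x_n y_n}$, hence $\sqrt{2(\ch\rho_{\Hn}(x,y)-1)}=|x-y|/\sqrt{x_n y_n}$. Comparing the two displays gives the claimed identity; this part is an immediate substitution with no obstacle.

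For part (2), I would take $D=\Bn$, so that $d_{\Bn}(x)=1-|x|$ and $d_{\Bn}(y)=1-|y|$, giving
\[
\frac{e^{h_{\Bn,c}(x,y)}-1}{c}=\frac{|x-y|}{\sqrt{(1-|x|)(1-|y|)}},
\]
whereas \eqref{sro} gives $\sh\tfrac{\rho_{\Bn}(x,y)}{2}=\frac{|x-y|}{\sqrt{(1-|x|^2)(1-|y|^2)}}$. So the desired chain of inequalities is equivalent, after cancelling the common factor $|x-y|$ (the case $x=y$ being trivial), to
\[
\frac{1}{\sqrt{(1-|x|^2)(1-|y|^2)}}\le \frac{1}{\sqrt{(1-|x|)(1-|y|)}}\le \frac{2}{\sqrt{(1-|x|^2)(1-|y|^2)}},
\]
i.e. to the elementary estimate $1\le 1+|x|\le 2$ (and likewise for $y$) applied inside the square roots, since $1-|x|^2=(1-|x|)(1+|x|)$. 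The left inequality uses $1+|x|\ge 1$ and the right uses $1+|x|\le 2$; both hold for all $x\in\Bn$. The only mild subtlety is bookkeeping the square roots and the product structure, but there is no real obstacle here either — the whole proposition is a routine unwinding of definitions combined with the trivial bound $1\le 1+|x|\le 2$.
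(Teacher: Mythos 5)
Your proposal is correct and follows essentially the same route as the paper: part (1) is the same direct substitution using \eqref{cro} and $d_{\Hn}(x)=x_n$, and part (2) is the same chain of inequalities obtained from \eqref{sro}, $d_{\Bn}(x)=1-|x|$, and the factorization $1-|x|^2=(1-|x|)(1+|x|)$ with $1\le 1+|x|\le 2$. Your write-up merely makes explicit the cancellation of $|x-y|$ and the elementary bounds that the paper leaves implicit.
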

\begin{proof}
(1) By \eqref{cro}, for $x, y\in\Hn$ we see that
\[
\sqrt{2(\ch\rho_{\Hn}(x,y)-1)}=\frac{|x-y|}{\sqrt{x_n y_n}}=\frac{e^{h_{\Hn,c}(x,y)}-1}{c}.
\]

(2) By \eqref{sro}  for $x, y\in\Bn$
\bequu
\sh{\frac{\rho_{\mathbb{B}^n}(x,y)}{2}} &=& \frac{|x-y|}{\sqrt{1-|x|^2}{\sqrt{1-|y|^2}}}\\
&\leq & \frac{|x-y|}{\sqrt{1-|x|}{\sqrt{1-|y|}}}\\
& \leq & \frac{2|x-y|}{\sqrt{1-|x|^2}{\sqrt{1-|y|^2}}} \, ,
\eequu
so that the function  $h_{\Bn,c}(x,y)$ satisfies
\[
\sh{\frac{\rho_{\mathbb{B}^n}(x,y)}{2}}\leq \frac{e^{h_{\Bn,c}(x,y)}-1}{c}\leq 2 \sh{\frac{\rho_{\mathbb{B}^n}(x,y)}{2}} \, .\qedhere
\]
\end{proof}

%%%%%%%Lem2.6

\begin{lem}
Let $g:\Bn\to \Bn$ be a M\"obius transformation with  $g(\Bn) = \Bn.$ Then for $c>0$ the inequality
\[
h_{\Bn,c}(g(x),g(y))\leq 2 h_{\Bn,c}(x,y)
\]
holds for all $x, y\in\Bn\, . $
\end{lem}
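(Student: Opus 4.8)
The plan is to reduce the statement about the metric $h_{\Bn,c}$ to the corresponding fact about the hyperbolic metric via Proposition \ref{4}(2). The key observation is that a M\"obius transformation $g$ with $g(\Bn)=\Bn$ is a hyperbolic isometry, so $\rho_{\Bn}(g(x),g(y)) = \rho_{\Bn}(x,y)$. Combining this with the two-sided estimate
\[
\sh{\frac{\rho_{\mathbb{B}^n}(x,y)}{2}}\leq \frac{e^{h_{\Bn,c}(x,y)}-1}{c}\leq 2 \sh{\frac{\rho_{\mathbb{B}^n}(x,y)}{2}}
\]
from Proposition \ref{4}(2), I would apply the upper bound at the pair $(g(x),g(y))$ and the lower bound at the pair $(x,y)$. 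Writing $s = \sh(\rho_{\Bn}(x,y)/2) = \sh(\rho_{\Bn}(g(x),g(y))/2)$, this gives
\[
e^{h_{\Bn,c}(g(x),g(y))} - 1 \le 2cs \le 2\left(e^{h_{\Bn,c}(x,y)}-1\right).
\]

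The remaining issue is to pass from the inequality $e^{h_{\Bn,c}(g(x),g(y))}-1 \le 2\bigl(e^{h_{\Bn,c}(x,y)}-1\bigr)$ to $h_{\Bn,c}(g(x),g(y)) \le 2\,h_{\Bn,c}(x,y)$. For this I would use the elementary inequality that for $t \ge 0$ one has $1 + 2(e^{t}-1) \le e^{2t}$, equivalently $2e^{t} - 1 \le e^{2t}$, i.e. $(e^{t}-1)^2 \ge 0$, which is trivially true. Setting $t = h_{\Bn,c}(x,y) \ge 0$, we get
\[
e^{h_{\Bn,c}(g(x),g(y))} \le 1 + 2\left(e^{h_{\Bn,c}(x,y)}-1\right) \le e^{2 h_{\Bn,c}(x,y)},
\]
and taking logarithms (both sides positive, $\log$ increasing) yields the claim.

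The steps in order are: (i) recall that $g$ preserves the hyperbolic metric of $\Bn$; (ii) set $s = \sh(\rho_{\Bn}(x,y)/2)$ and invoke Proposition \ref{4}(2) once at $(x,y)$ (lower bound) and once at $(g(x),g(y))$ (upper bound), so that $e^{h_{\Bn,c}(g(x),g(y))}-1 \le 2cs \le 2(e^{h_{\Bn,c}(x,y)}-1)$; (iii) apply $2e^{t}-1\le e^{2t}$ with $t=h_{\Bn,c}(x,y)$; (iv) take logarithms. I do not expect any genuine obstacle here: the only mild subtlety is making sure the constant $2$ from the hyperbolic comparison and the constant $2$ in the target inequality line up correctly, which the elementary bound in step (iii) handles cleanly. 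One could also note that the same argument shows $h_{\Bn,c}(g(x),g(y)) \ge \tfrac12 h_{\Bn,c}(x,y)$ by symmetry (swapping the roles of the two bounds and of $g$ with $g^{-1}$), so the map $g$ is $h_{\Bn,c}$-bilipschitz with constant $2$, though this is not needed for the stated lemma.
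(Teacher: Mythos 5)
Your proof is correct and follows essentially the same route as the paper: invariance of $\rho_{\Bn}$ under $g$, the two-sided estimate of Proposition \ref{4}(2), and then a Bernoulli-type inequality (your step $2e^{t}-1\le e^{2t}$ is exactly the paper's Bernoulli inequality $1+2u\le(1+u)^{2}$ with $u=e^{t}-1$). No gaps.
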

\begin{proof}

If $g:\Bn\to \Bn$ is a M\"obius transformation, then $$\rho_{\Bn}(g(x), g(y) )=\rho_{\Bn}(x,y)$$
and  by Proposition \ref{4} (2)
\[
\sh{\frac{\rho_{\mathbb{B}^n}(x,y)}{2}}\leq \frac{e^{h_{\Bn,c}(g(x), g(y))}-1}{c}\leq 2 \sh{\frac{\rho_{\mathbb{B}^n}(x,y)}{2}} \, .
\]
Together with the Bernoulli inequality \cite[(3.6)]{vu1}, this yields
\[
h_{\Bn,c}(g(x), g(y))\leq \log\left(1+2c \sh{\frac{\rho_{\mathbb{B}^n}(x,y)}{2}} \right)\leq 2 h_{\Bn,c}(x,y)\, .\qedhere
\]
\end{proof}

%\begin{conj}
%There exist $A, B>0 $ such  that for all $t>0$
%\[
%At < \log\left(1+2c \sh{\frac{t}{2}} \right)< Bt.
%\]
%\end{conj}

%%%%%%%Prop2.7

\begin{prop}\label{2.8}
For $c, t>0$, let $f(t)=\log\left(1+2c \sh{\frac{t}{2}} \right).$ Then the double inequality $$\frac{c}{2(1+c)}t< f(t) < ct$$ holds for $c\geq \frac{1}{2}$ and $t>0\, . $
\end{prop}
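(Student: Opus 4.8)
The plan is to prove the two inequalities separately, treating both as elementary one-variable calculus facts once the right auxiliary functions are set up. Write $f(t)=\log\left(1+2c\sh\frac{t}{2}\right)$ and note $f(0)=0$, so both bounds will follow from control of $f'(t)$ together with convexity/monotonicity arguments.

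For the upper bound $f(t)<ct$, I would compute
\[
f'(t)=\frac{c\ch\frac{t}{2}}{1+2c\sh\frac{t}{2}}.
\]
It suffices to show $f'(t)<c$ for all $t>0$, i.e.\ $\ch\frac{t}{2}<1+2c\sh\frac{t}{2}$. Since $\ch\frac{t}{2}<1+\sh\frac{t}{2}$ for $t>0$ (because $\ch s - \sh s = e^{-s}<1$ and $\sh s>0$), and $1+\sh\frac{t}{2}\le 1+2c\sh\frac{t}{2}$ precisely when $c\ge\frac12$, we get $f'(t)<c$ on $(0,\infty)$ for $c\ge\frac12$; integrating from $0$ gives $f(t)<ct$.

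For the lower bound $\frac{c}{2(1+c)}t<f(t)$, the cleanest route is to show $f$ is concave on $(0,\infty)$, so that $f(t)/t$ is decreasing, hence $f(t)/t > \lim_{t\to\infty} f(t)/t$... but that limit is $0$, which is too weak. Instead I would argue that concavity of $f$ plus $f(0)=0$ gives, for any fixed $T>0$ and $t\in(0,T]$, the chord bound $f(t)\ge \frac{f(T)}{T}t$; since we need this for all $t$, a better approach is to bound $f'$ from below and note $f(t)=\int_0^t f'(s)\,ds$. We have $f'(s)=\frac{c\ch\frac s2}{1+2c\sh\frac s2}$, and since $f'$ is decreasing in $s$ (this is where concavity of $f$ enters — one checks $f''<0$ by differentiating, or observes $\ch/(1+2c\sh)$ decreases because its derivative has numerator $\sh\frac s2(1+2c\sh\frac s2)\cdot\frac12-\ch\frac s2\cdot c\ch\frac s2\cdot\frac12 = \frac12(\sh\frac s2 + 2c\sh^2\frac s2 - c\ch^2\frac s2)=\frac12(\sh\frac s2 - c)$, which is negative for small $s$ but eventually positive — so $f'$ is not monotone and this needs care). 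Given this subtlety, the more robust plan is: show directly that $g(t):=f(t)-\frac{c}{2(1+c)}t>0$ for $t>0$ by checking $g(0)=0$ and analyzing $g'(t)=f'(t)-\frac{c}{2(1+c)}$; one finds $g'(0)=c-\frac{c}{2(1+c)}>0$ and $g'(t)\to 0^+$ as $t\to\infty$ (since $f'(t)\to\frac{c}{2c}\cdot\frac{\ch}{\sh}\to\frac12>\frac{c}{2(1+c)}$ when... actually $f'(t)\to\frac12$), so it suffices to show $g'$ has no zero, equivalently $f'(t)>\frac{c}{2(1+c)}$ for all $t>0$, i.e.\ $2(1+c)\ch\frac t2>1+2c\sh\frac t2$. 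Using $\ch\ge 1$ and $\ch\frac t2>\sh\frac t2$, the left side exceeds $2(1+c)\sh\frac t2 + 2(1+c)\cdot 0$... more simply $2(1+c)\ch\frac t2 \ge 2+2c\ch\frac t2 > 1+2c\sh\frac t2$, which holds since $2>1$ and $\ch>\sh$. Hence $g'>0$ on $(0,\infty)$ and $g(t)>g(0)=0$.

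The main obstacle is bookkeeping: making sure the reduction of each inequality to a pointwise derivative estimate is valid (i.e.\ strictness is preserved and the endpoint value at $t=0$ is genuinely $0$), and choosing elementary bounds between $\sh$ and $\ch$ that are sharp enough. Notably neither bound actually needs $f'$ to be monotone — each reduces to the single elementary inequality $\ch s>\sh s$ (equivalently $e^{-s}>0$) together with $\ch s\ge 1$ — so I would avoid any second-derivative / concavity discussion entirely and present both halves as: $f(0)=0$, and $\frac{c}{2(1+c)}<f'(t)<c$ for every $t>0$, where each side of this derivative sandwich is verified by a two-line manipulation of $\ch\frac t2$ versus $\sh\frac t2$. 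Integrating over $[0,t]$ then finishes the proof, and the strictness at $t>0$ is immediate since the integrand inequalities are strict.
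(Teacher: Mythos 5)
Your final argument is correct and follows essentially the paper's route: both proofs reduce the claim to $f(0)=0$ together with the derivative sandwich $\frac{c}{2(1+c)}<f'(t)<c$ for all $t>0$ and then integrate. The only real difference is the verification of the lower derivative bound, where your direct chain $2(1+c)\ch\frac{t}{2}\geq 2+2c\ch\frac{t}{2}>1+2c\sh\frac{t}{2}$ is a bit cleaner than the paper's use of $\ch\frac{t}{2}>\frac12 e^{t/2}$ and $\sh\frac{t}{2}<\frac12 e^{t/2}$ followed by a monotonicity analysis of $u\mapsto u/(1+cu)$; the abandoned concavity digression (which also contains an algebra slip, since $2c\sh^2\frac{s}{2}-c\ch^2\frac{s}{2}=c\sh^2\frac{s}{2}-c$, not $-c$ alone) is, as you yourself conclude, unnecessary and should simply be omitted.
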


\begin{proof}
We first show that
\beq\label{2.9}
f'(t)< c, \quad \text{for all}\, c\geq \frac{1}{2},\, t>0 \, .
\eeq
This inequality is equivalent to
\[
\frac{\sqrt{1+\sh^2{\frac{t}{2}}}}{1+2c \sh{\frac{t}{2}}}<1
\]
which clearly holds for $c\geq \frac{1}{2}\, . $

We next show that
\beq\label{2.10}
f'(t)> \frac{c}{2(1+c)}, \quad \text{for all}\, c>0,\, t>0 \, .
\eeq
From the definition of $\sh$ and $\ch$ it readily follows that for all $t\geq 0$, $\ch{\frac{t}{2}}> \frac{1}{2} e^{\frac{t}{2}}$ and $\sh{\frac{t}{2}}< \frac{1}{2} e^{\frac{t}{2}}\, . $
Therefore
\[
f'(t)>\frac{c \frac{1}{2} e^{\frac{t}{2}}}{1+c e^{\frac{t}{2}}}=\frac{c}{2}\cdot\frac{u}{1+cu};\, u= e^{\frac{t}{2}} \, .
\]
Writing $g(u)=\frac{u}{1+cu}$ we see that $g$ is increasing with
\[
\lim_{u\to 1} g(u)=\frac{1}{1+c}\quad\mbox{and}\quad \lim_{u\to \infty} g(u)=\frac{1}{c} \, .
\]
Therefore we have
\[
f'(t)> \frac{c}{2} \min\left\{\frac{1}{c},\frac{1}{c+1}\right\}= \frac{c}{2(1+c)} \, .
\]
Because $f(0)=0$, \eqref{2.9} and \eqref{2.10} imply the desired conclusion.
\end{proof}

%%%%%%%Lem2.10

\begin{lem}
If $g:\Bn \to \Hn$ is a M\"obius transformation and $x, y\in \Bn$ then for $c>0$
\[
h_{\Hn,c}(g(x), g(y))\leq 2 h_{\Bn,c}(x,y) \, .
\]
\end{lem}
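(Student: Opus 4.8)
The plan is to reduce the statement to Proposition \ref{4} together with the M\"obius invariance of the hyperbolic metric and one elementary inequality, exactly in the spirit of the preceding $\Bn\to\Bn$ lemma.

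First I would rewrite the left-hand side explicitly. By Proposition \ref{4}(1),
$$\frac{e^{h_{\Hn,c}(g(x),g(y))}-1}{c}=\sqrt{2(\ch\rho_{\Hn}(g(x),g(y))-1)},$$
and since $\ch t-1=2\sh^2(t/2)$, the right-hand side equals $2\sh(\rho_{\Hn}(g(x),g(y))/2)$. Because $g$ is a M\"obius transformation with $g(\Bn)=\Hn$, the hyperbolic metric is preserved, $\rho_{\Hn}(g(x),g(y))=\rho_{\Bn}(x,y)$, so that
$$h_{\Hn,c}(g(x),g(y))=\log\Bigl(1+2c\,\sh\tfrac{\rho_{\Bn}(x,y)}{2}\Bigr).$$

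Next I would bound this in terms of $h_{\Bn,c}(x,y)$. Put $a=c\,\sh(\rho_{\Bn}(x,y)/2)\ge 0$. The Bernoulli inequality gives $1+2a\le(1+a)^2$, hence $\log(1+2a)\le 2\log(1+a)$. On the other hand the left inequality of Proposition \ref{4}(2) says precisely $a\le e^{h_{\Bn,c}(x,y)}-1$, i.e. $\log(1+a)\le h_{\Bn,c}(x,y)$. Combining these,
$$h_{\Hn,c}(g(x),g(y))=\log(1+2a)\le 2\log(1+a)\le 2h_{\Bn,c}(x,y),$$
which is the asserted inequality.

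There is essentially no serious obstacle here; the only points requiring care are the identity $\ch t-1=2\sh^2(t/2)$ (used to convert the expression coming from Proposition \ref{4}(1) into a $\sh(\cdot/2)$ term so that Proposition \ref{4}(2) applies) and the fact that a M\"obius identification of $\Bn$ with $\Hn$ is a hyperbolic isometry. After that the computation mirrors the $\Bn\to\Bn$ case word for word. It is worth noting that, in contrast to Proposition \ref{2.8}, this argument imposes no lower bound on $c$ and is valid for every $c>0$.
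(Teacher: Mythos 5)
Your proof is correct and follows essentially the same route as the paper's: Proposition \ref{4}(1), the identity $\sqrt{2(\ch t-1)}=2\sh(t/2)$, M\"obius invariance of the hyperbolic metric, and then Bernoulli's inequality combined with the left-hand bound of Proposition \ref{4}(2). The only cosmetic difference is that the paper derives $2\sh(t/2)$ via $e^{t/2}-e^{-t/2}$ rather than the half-angle identity, and leaves the Bernoulli step implicit.
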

\begin{proof}
By Proposition \ref{4} (1) and \cite[(2.21)]{vu}
\bequu
\frac{e^{h(g(x), g(y))}-1}{c} & = & \sqrt{2(\ch\rho_{\Hn}(g(x) , g(y))-1)}\\
&=& \sqrt{2(\ch\rho_{\Bn}(x,y)-1)}=e^{\rho_{\Bn}(x,y)/2}-e^{-\rho_{\Bn}(x,y)/2}\\
&=& 2\sh(\rho_{\Bn}(x,y)/2)\, .
\eequu
Together with  Bernoulli's inequality, this yields
\[
h_{\Hn,c}(g(x), g(y))= \log\left(1+2c\sh{\frac{\rho_{\Bn}(x,y)}{2}}\right)\leq 2 h_{\Bn,c}(x,y) \, .\qedhere
\]

\end{proof}

%%%%%%%Lem2.11
%===========================================================================================
\begin{lem}\label{set5}
For a nonempty open set  $D\subsetneq \Rn$ and $x, y\in D$, let
$$
  \varphi_D(x,y)=\log \left(1+\max\left\{{|x-y|\over \sqrt{d(x)d(y)}}\;,\;
           {|x-y|^2\over d(x)d(y)}\right\}\right)\,.
$$
Then the double inequality
\beq\label{2.10*}
j_D(x,y)/2 \leq\varphi_D(x,y)\leq 2j_D(x,y)
\eeq
holds.
\end{lem}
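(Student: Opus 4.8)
The plan is to compare the two quantities inside the logarithms directly, using the elementary fact that $\log(1+s)$ is increasing and the submultiplicative-type inequality $\log(1+st)\le \log(1+s)+\log(1+t)$ together with $\tfrac12\log(1+s)\le\log(1+\sqrt s)$ type estimates. Concretely, write $a=\dfrac{|x-y|}{\sqrt{d(x)d(y)}}$ and note that $\min\{d(x),d(y)\}\le\sqrt{d(x)d(y)}$, so that
\[
\frac{|x-y|}{\sqrt{d(x)d(y)}}\ \le\ \frac{|x-y|}{\min\{d(x),d(y)\}}.
\]
Hence the term $a$ appearing in $\varphi_D$ is always dominated by the argument of $j_D$.

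For the lower bound $j_D(x,y)/2\le\varphi_D(x,y)$, set $m=\min\{d(x),d(y)\}$ and $t=\dfrac{|x-y|}{m}$, so $j_D(x,y)=\log(1+t)$. I would split into two cases according to the size of $t$ (equivalently, whether $|x-y|$ is larger or smaller than the relevant distance). When $t\le 1$ one expects $a\ge t/2$ or $a^2$ comparable, while when $t\ge 1$ the square term $\dfrac{|x-y|^2}{d(x)d(y)}$ takes over; in either regime one checks $\max\{a,a^2\}\ge$ (a constant times) $t$, or more precisely that $\log(1+\max\{a,a^2\})\ge\tfrac12\log(1+t)$. The cleanest route is: since $\sqrt{d(x)d(y)}\le\max\{d(x),d(y)\}$ is false in general, instead use that $d(x)d(y)\le \max\{d(x),d(y)\}\cdot\text{(the other one)}$ and bound $\dfrac{|x-y|^2}{d(x)d(y)}\ge\dfrac{|x-y|^2}{M m}$ where $M=\max\{d(x),d(y)\}$, then observe $M\le |x-y|+m$ by the triangle inequality for the distance function, giving $\dfrac{|x-y|^2}{Mm}\ge\dfrac{|x-y|^2}{(|x-y|+m)m}=\dfrac{t^2}{1+t}$. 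Combined with $a\ge t\cdot\sqrt{m/M}$ and a short case analysis one gets $\max\{a,a^2\}\ge \min\{t, t^2/(1+t)\}$, and then $\log(1+\min\{t,t^2/(1+t)\})\ge\tfrac12\log(1+t)$ follows from $1+t^2/(1+t)\ge\sqrt{1+t}$, i.e. $(1+t)(1+t^2/(1+t))\ge (1+t)^{3/2}$... checking $((1+t)+t^2)^2\ge(1+t)^3$, which reduces to a polynomial inequality valid for all $t\ge 0$.

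For the upper bound $\varphi_D(x,y)\le 2j_D(x,y)$, note $\sqrt{d(x)d(y)}\ge m$ is false too, but $d(x)d(y)\ge m^2$ is trivially true, so $a\le t$ and $a^2\le t^2$. Therefore $\max\{a,a^2\}\le\max\{t,t^2\}$ and
\[
\varphi_D(x,y)\le\log(1+\max\{t,t^2\})\le\log\bigl((1+t)^2\bigr)=2\log(1+t)=2j_D(x,y),
\]
using $\max\{t,t^2\}\le t+t^2\le (1+t)^2-1$. This direction is immediate.

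The main obstacle is the lower bound: one must extract the factor $1/2$ uniformly over all configurations of $x,y,\partial D$, and the only geometric input available is the triangle inequality $|d(x)-d(y)|\le|x-y|$ for the distance-to-boundary function (which holds since $d_D$ is $1$-Lipschitz), so getting $M\le m+|x-y|$ into play at the right moment is the crux; after that it is the scalar inequality $((1+t)+t^2)^2\ge(1+t)^3$ (equivalently the claim that $\log(1+\max\{a,a^2\})\ge\tfrac12 j_D$), which is routine to verify by expanding. I would present the case split $t\le 1$ versus $t> 1$ to keep the algebra transparent, since in the first case $\max\{a,a^2\}=a$ is the binding term and in the second $\max\{a,a^2\}=a^2$ dominates.
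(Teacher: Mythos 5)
Your upper bound is fine and is essentially the paper's argument: $d(x)d(y)\ge m^2$ gives $a\le t$ (your aside that ``$\sqrt{d(x)d(y)}\ge m$ is false'' is a slip --- it is the same statement as $d(x)d(y)\ge m^2$), and then $1+\max\{t,t^2\}\le(1+t)^2$ finishes it.

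The lower bound, however, contains a genuine error. You funnel everything through $\max\{a,a^2\}\ge\min\{t,\,t^2/(1+t)\}=t^2/(1+t)$ and then assert $\log\bigl(1+t^2/(1+t)\bigr)\ge\tfrac12\log(1+t)$, claiming the resulting polynomial inequality $(1+t+t^2)^2\ge(1+t)^3$ holds for all $t\ge0$. It does not: the difference is $t(t^3+t^2-1)$, which is negative for $0<t<t_0\approx0.755$. This is not a repairable algebra slip within your scheme, because for $|x-y|$ small relative to $d(x),d(y)$ the quantity $\log(1+a^2)$ is of order $|x-y|^2$ while $j_D$ is of order $|x-y|$, so no estimate routed solely through the squared term can produce the factor $\tfrac12$. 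The fix --- and the paper's actual proof --- uses the \emph{first} term of the max, and you already have the needed ingredient in hand: $a=t\sqrt{m/M}$ together with $M\le m+|x-y|=m(1+t)$ gives $a\ge t/\sqrt{1+t}$, and then
\[
\Bigl(1+\tfrac{t}{\sqrt{1+t}}\Bigr)^2=1+\tfrac{2t}{\sqrt{1+t}}+\tfrac{t^2}{1+t}\ \ge\ 1+t,
\]
since $\tfrac{2}{\sqrt{1+t}}+\tfrac{t}{1+t}\ge1$ reduces to $2\sqrt{1+t}\ge1$. Hence $\log(1+a)\ge\tfrac12\log(1+t)=j_D(x,y)/2$ with no case split at all; the paper phrases this same computation as $\tfrac{1}{d(x)}\le\tfrac{2}{\sqrt{d(x)d(y)}}+\tfrac{|x-y|}{d(x)d(y)}$ for $d(x)\le d(y)$.
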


\begin{proof}
For the first inequality in \eqref{2.10*}, we may assume that $d(x)\leq d(y)\, . $ We claim that
\[
\frac{j_D(x,y)}{2}\leq \log\left(1+\frac{|x-y|}{\sqrt{d(x)d(y)}}\right).
\]
This inequality is equivalent to
\[
\log\left(1+\frac{|x-y|}{d(x)}\right)\leq 2\log\left(1+\frac{|x-y|}{\sqrt{d(x)d(y)}}\right)\, ,
\]
and to
\[
\frac{1}{d(x)}\leq \frac{2}{\sqrt{d(x)d(y)}}+\frac{|x-y|}{d(x)d(y)} \, .
\]
This last inequality holds by the triangle inequality, because $d(x)\leq d(y)\, . $

For the second inequality
\begin{eqnarray*}
\varphi_D(x,y) &\leq & \log\left(1+\frac{|x-y|}{\sqrt{d(x)d(y)}}+\frac{|x-y|^2}{d(x)d(y)}\right)\\
&\leq & 2\log\left(1+\frac{|x-y|}{\sqrt{d(x)d(y)}}\right)\leq 2j_D(x,y)\, .
\end{eqnarray*}

\end{proof}
The above proof also yields the following result.

%%%%%%%%Cor2.13

\begin{cor}
For a nonempty open set  $D\subsetneq \Rn$ and $x, y\in D$,
\[
j_D(x,y)/2 \leq h_{D,1}(x,y)\leq \varphi_D(x,y)\leq 2h_{D,1}(x,y)\leq 2j_D(x,y) \,,
\]
holds.
\end{cor}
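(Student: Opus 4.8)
The plan is to read off the corollary from the inequalities already established in Lemma~\ref{set5} and its proof, noting only that $h_{D,1}$ sits between $j_D/2$ and $\varphi_D$. Recall that by definition
\[
h_{D,1}(x,y) = \log\left(1+\frac{|x-y|}{\sqrt{d(x)d(y)}}\right),
\]
so the single term $|x-y|/\sqrt{d(x)d(y)}$ inside the logarithm of $h_{D,1}$ is exactly one of the two terms whose maximum appears inside the logarithm of $\varphi_D$. Hence the inequality $h_{D,1}(x,y)\le\varphi_D(x,y)$ is immediate from monotonicity of $\log$.

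Next I would extract $j_D(x,y)/2 \le h_{D,1}(x,y)$ directly from the first half of the proof of Lemma~\ref{set5}: there it is shown (assuming without loss of generality $d(x)\le d(y)$, which is legitimate by symmetry of both sides in $x$ and $y$) that
\[
\frac{j_D(x,y)}{2}\le \log\left(1+\frac{|x-y|}{\sqrt{d(x)d(y)}}\right)=h_{D,1}(x,y),
\]
using the triangle inequality $|x-y|\le d(x)+d(y)$ (equivalently $d(x)\le d(y)+|x-y|$, which makes $d(x)\le\sqrt{d(x)d(y)}$ plus a correction term work out). For the remaining bound $\varphi_D(x,y)\le 2h_{D,1}(x,y)$, the second half of the proof of Lemma~\ref{set5} already shows $\varphi_D(x,y)\le 2\log\left(1+\frac{|x-y|}{\sqrt{d(x)d(y)}}\right)=2h_{D,1}(x,y)$, by bounding the max by the sum and then using $1+a+a^2\le(1+a)^2$. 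Finally, $2h_{D,1}(x,y)\le 2j_D(x,y)$ follows because $\sqrt{d(x)d(y)}\ge\min\{d(x),d(y)\}$, so the argument of the logarithm in $h_{D,1}$ is at most that in $j_D$.

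There is really no obstacle here: every one of the four inequalities in the chain is a consequence of an estimate that appears verbatim (or with the roles of the two terms in the max specialized) in the proof of Lemma~\ref{set5}, together with the trivial observation $\min\{d(x),d(y)\}\le\sqrt{d(x)d(y)}$. The only point deserving a word of care is the symmetry reduction $d(x)\le d(y)$ used for the lower bound $j_D/2\le h_{D,1}$: both $j_D$, $h_{D,1}$, and $\varphi_D$ are symmetric in their arguments, so this assumption costs nothing. Thus the corollary is proved by simply concatenating these observations, exactly as the sentence preceding it ("The above proof also yields the following result") indicates.
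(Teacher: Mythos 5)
Your proposal is correct and is precisely the paper's intended argument: the corollary is obtained by concatenating the two estimates already established in the proof of Lemma \ref{set5} with the trivial observations that $h_{D,1}(x,y)\le\varphi_D(x,y)$ (the term $|x-y|/\sqrt{d(x)d(y)}$ is one of the two entries of the max) and that $\sqrt{d(x)d(y)}\ge\min\{d(x),d(y)\}$ gives $h_{D,1}(x,y)\le j_D(x,y)$. The only blemish is your parenthetical claim that the triangle inequality behind $j_D/2\le h_{D,1}$ is ``$|x-y|\le d(x)+d(y)$'' --- that inequality is false in general (take $x,y$ far apart but both near $\partial D$); what is actually needed, and what the proof of Lemma \ref{set5} uses via Lemma \ref{l1}, is $d(y)\le d(x)+|x-y|$ under the normalization $d(x)\le d(y)$, but since you are citing the already-proved step of that lemma this slip does not affect the validity of your argument.
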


\begin{rem}
The function $\varphi_{\mathbb{B}^2}(x,y)$ is not a metric, because the inequality $$\varphi_{\mathbb{B}^2}(t,0)+\varphi_{\mathbb{B}^2}(0,-t)\ge\varphi_{\mathbb{B}^2}(t,-t)$$ fails for $t\in (0,1)\, . $
\end{rem}

%%%%%%%%%%%%%%%%%%%%%%%%%%%%%%%%%%%%%%%%%%%%%%%%%%%%%%%%%%%%%%%%%%%%
%%%%%%%%%%%%%%%%%%%%%%%%%%%%%%%%%%%%%%%%%%%%%%%%%%%%%%%%%%%%%%%%%%%%%
%%%%%%%%%%%%%%%%%%%%%%%%%%%%%%%%%%%%%%%%%%%%%%%%%%%%%%%%%%%%%%%%%%%%%%

\section{Proof for the main result}\label{section3}

%%%%%%%%%%%%%%%%%%%%%%%%%%%%%%%%%%%%%%%%%%%%%%%%%%%%%%%%%%%%%%%%%%%%%%%%
%%%%%%%%%%%%%%%%%%%%%%%%%%%%%%%%%%%%%%%%%%%%%%%%%%%%%%%%%%%%%%%%%%%%%
%%%%%%%%%%%%%%%%%%%%%%%%%%%%%%%%%%%%%%%%%%%%%%%%%%%%%%%%%%%%%%%%%%%%%
%=================================================================
Throughout this section we assume that $D$ is a nonempty open set in a metric space $(X, \rho)$ and $A$ is a nonempty subset of $X\setminus D.$ For every $x\in D$ write $d_{D, A}(x)=\dist (x, A).$

%%%%%%%%Lem3.1

\begin{lem}\label{l1}
The inequality
\begin{equation}\label{eq1}
d_{D, A}(x) \leqslant \rho(x,y) + d_{D, A}(y)
\end{equation}
holds for all $x$, $y \in D\, . $
\end{lem}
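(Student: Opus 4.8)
The plan is to show that the function $x \mapsto d_{D,A}(x)=\dist(x,A)$ is $1$-Lipschitz with respect to $\rho$, which is exactly the content of \eqref{eq1}. The only tool needed is the triangle inequality in the metric space $(X,\rho)$ together with the definition of $\dist$ as an infimum; no use of the openness of $D$ or of $A\subset X\setminus D$ is required, so I would remark that the statement holds for an arbitrary nonempty $A\subset X$.

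First I would fix $x,y\in D$ and an arbitrary point $a\in A$. By the triangle inequality, $\rho(x,a)\leqslant \rho(x,y)+\rho(y,a)$. Since $d_{D,A}(x)=\inf_{a'\in A}\rho(x,a')\leqslant \rho(x,a)$, we obtain
\[
d_{D,A}(x)\leqslant \rho(x,y)+\rho(y,a)
\]
for every $a\in A$. Next I would take the infimum over all $a\in A$ on the right-hand side; the term $\rho(x,y)$ does not depend on $a$, so this yields
\[
d_{D,A}(x)\leqslant \rho(x,y)+\inf_{a\in A}\rho(y,a)=\rho(x,y)+d_{D,A}(y),
\]
which is \eqref{eq1}.

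There is essentially no obstacle here; the only point deserving a word of care is that $A$ is nonempty, so that $d_{D,A}(y)$ is a genuine infimum of a nonempty set of nonnegative reals and hence finite (it is also finite because $\partial D\neq\varnothing$ forces $X\setminus D\neq\varnothing$ in the intended application), making the manipulation of the infimum legitimate. Swapping the roles of $x$ and $y$ gives the companion inequality $d_{D,A}(y)\leqslant \rho(x,y)+d_{D,A}(x)$, so in fact $|d_{D,A}(x)-d_{D,A}(y)|\leqslant \rho(x,y)$, which may be recorded for later use.
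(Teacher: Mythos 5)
Your proof is correct and is exactly the argument the paper has in mind: the paper simply states that \eqref{eq1} follows directly from the triangle inequality, and your write-up spells out that one-line proof by passing the triangle inequality through the infimum over $a\in A$. The added remarks (that openness of $D$ is not needed and that the two-sided Lipschitz bound follows by symmetry) are accurate but not part of the paper's argument.
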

\begin{proof}
Inequality \eqref{eq1} follows directly from the triangle inequality.
\end{proof}

Let
$$
h_{D}^{A}(x,y) = \log\left(1+2\frac{\rho(x,y)}{\sqrt{d_{D, A}(x)d_{D, A}(y)}}\right)
$$
for all $x$, $y \in D$ and let $h_{D}^{\partial D}=h_D$ for short. Thus, $h_D=h_{D, 2}.$

%%%%%%%%%%Prop3.3

\begin{prop}\label{p2}
The triangle inequality
\begin{equation}\label{eq2}
h_{D}^{A}(x,y) \leqslant h_{D}^{A}(x,z) + h_{D}^{A}(z,y)
\end{equation}
holds for all $x, y, z \in D$ and every $(X, \rho),$ and every nonempty $A\subseteq X\setminus D$ if the triangle inequality
\begin{equation}\label{eq3}
h_{I}(p,s) \leqslant h_{I}(p,q) + h_{I}(q,s)
\end{equation}
holds for every open interval $I = (a,b)$ and all $p, s, q \in I$ which satisfy the conditions $p<q<s$ and $d_I(p) = |a-p|$, $d_I(s)=|s-b|\, . $
\end{prop}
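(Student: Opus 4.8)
The plan is to reduce the triangle inequality \eqref{eq2} in the arbitrary metric space $(X,\rho)$ to the one-dimensional situation of \eqref{eq3}, by producing an open interval $I=(a,b)$ and three points $p<q<s$ in it whose mutual distances and distances to the endpoints are tuned so that $h_I(p,s)$ dominates $h_D^A(x,y)$ while $h_I(p,q)$ and $h_I(q,s)$ are dominated by $h_D^A(x,z)$ and $h_D^A(z,y)$. Fix $x,y,z\in D$ and write $d=d_{D,A}$ for short. If $x=z$ or $z=y$ then \eqref{eq2} is trivial, so assume $\alpha:=\rho(x,z)>0$ and $\beta:=\rho(z,y)>0$; note also $d(x),d(y),d(z)>0$ since $D$ is open and $A\subseteq X\setminus D$ is nonempty. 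Applying Lemma \ref{l1} in both directions gives $|d(x)-d(z)|\le\alpha$ and $|d(z)-d(y)|\le\beta$, whence in particular $d(z)\le d(x)+\alpha$, $d(z)\le d(y)+\beta$, $d(x)\le d(y)+\alpha+\beta$ and $d(y)\le d(x)+\alpha+\beta$.

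Next I would set $I=(0,b)$ with $b=d(x)+\alpha+\beta+d(y)$, and put $p=d(x)$, $q=d(x)+\alpha$, $s=d(x)+\alpha+\beta$. Then $0<p<q<s<b$ because $\alpha,\beta>0$. Using the inequalities collected above one checks that
$$d_I(p)=\min\{p,\,b-p\}=\min\{d(x),\,\alpha+\beta+d(y)\}=d(x)=|0-p|,$$
$$d_I(s)=\min\{s,\,b-s\}=\min\{d(x)+\alpha+\beta,\,d(y)\}=d(y)=|s-b|,$$
$$d_I(q)=\min\{d(x)+\alpha,\,d(y)+\beta\}\ge d(z).$$
Thus the points $p<q<s$ in $I$ are exactly of the type for which \eqref{eq3} is assumed to hold.

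Finally I would chain the estimates. Since $\log$ is increasing and $t\mapsto t^{-1/2}$ is decreasing, and since $s-p=\alpha+\beta\ge\rho(x,y)$ by the triangle inequality in $(X,\rho)$, while $q-p=\alpha=\rho(x,z)$, $s-q=\beta=\rho(z,y)$, we obtain
$$h_D^A(x,y)\le\log\left(1+\frac{2(\alpha+\beta)}{\sqrt{d(x)d(y)}}\right)=h_I(p,s),$$
and, using $d_I(q)\ge d(z)$,
$$h_I(p,q)=\log\left(1+\frac{2\alpha}{\sqrt{d(x)d_I(q)}}\right)\le h_D^A(x,z),\qquad h_I(q,s)\le h_D^A(z,y).$$
Combining these with the hypothesis $h_I(p,s)\le h_I(p,q)+h_I(q,s)$ yields \eqref{eq2}.

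The step that needs the most care is the verification of the three boundary-distance identities and inequality in the interval $I$: one must confirm that $p$ really is the nearer to the left endpoint, $s$ the nearer to the right endpoint, and that $d_I(q)\ge d_{D,A}(z)$. This is precisely where Lemma \ref{l1}, i.e. the Lipschitz-type behaviour of $d_{D,A}$, enters, and it is this point that legitimizes collapsing the general metric space to a single interval; everything else is monotonicity of $\log$ and of the square root.
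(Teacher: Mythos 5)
Your proof is correct and follows essentially the same route as the paper: both reduce \eqref{eq2} to the one-dimensional hypothesis by constructing an interval whose endpoint gaps are $d(x)$ and $d(y)$ and whose interior gaps are (multiples of) $\rho(x,z)$ and $\rho(z,y)$, and both invoke Lemma \ref{l1} to verify that $p$ is nearest the left endpoint, $s$ nearest the right one, and $d_I(q)\geq d_{D,A}(z)$. The only difference is presentational: the paper argues by contradiction and rescales by $k=\rho(x,y)/(\rho(x,z)+\rho(z,y))$ so that $|p-s|=\rho(x,y)$ exactly, whereas you keep $|p-s|=\rho(x,z)+\rho(z,y)$ and absorb the triangle inequality for $\rho$ into the monotonicity of $h_I$ in the distance argument, which gives a slightly more direct version of the same proof.
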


\begin{proof}
Inequality \eqref{eq2} is trivial if $x=y$ or $x=z$ or $z=y\, . $ Let $\rho(x,y) \neq 0$, $\rho(x,z) \neq 0$ and $\rho(z,y) \neq 0\, . $ Inequality \eqref{eq2} can be written in the form
\begin{equation}\label{eq4}
\rho(x,y) \leqslant \rho(x,z) \frac{\sqrt{d_{D, A}(y)}}{\sqrt{d_{D, A}(z)}} + \rho(z,y) \frac{\sqrt{d_{D, A}(x)}}{\sqrt{d_{D, A}(z)}} + 2 \frac{\rho(x,z)\rho(z,y)}{d_{D, A}(z)}.
\end{equation}
Write
$$
\Delta(x,y,z) = \min\{\rho(x,z)+d_{D, A}(x), \rho(y,z)+d_{D, A}(y)\}.
$$
It follows from Lemma \ref{l1} that
\begin{equation}\label{eq5}
d_{D, A}(z) \leqslant \Delta(x,y,z).
\end{equation}
If there are $x_1, y_1, z_1 \in D$ such that \eqref{eq4} does not hold for $x=x_1$, $y=y_1$, $z=z_1$, then using \eqref{eq5} we obtain
\begin{multline}\label{eq6}
\rho(x_1, y_1) > \rho(x_1,z_1) \frac{\sqrt{d_{D, A}(y_1)}}{\sqrt{\Delta(x_1,y_1,z_1)}} \\
{} + \rho(z_1,y_1) \frac{\sqrt{d_{D, A}(x_1)}}{\sqrt{\Delta(x_1,y_1,z_1)}} + 2 \frac{\rho(x_1,z_1)\rho(z_1,y_1)}{\Delta(x_1,y_1,z_1)} \,.
\end{multline}
Write
\begin{equation}\label{eq7}
k = \frac{\rho(x_1, y_1)}{\rho(x_1,z_1) + \rho(z_1,y_1)}\,.
\end{equation}
Let us consider an open interval $I = (p', s') \subset \mathbb R$ and the points $p, q, s \in (p', s')$ such that
$$
p' < p< q<s <s'\,,
$$
and
\begin{equation}\label{eq7'}
\begin{split}
|p-q|=k\rho(x_1, z_1), \quad |q-s| = k\rho(z_1,y_1)\,, \\
|p'-p|=k d_D(x_1), \quad  |s'-s|=k d_D(y_1)\,,
\end{split}
\end{equation}
(see Figure~\ref{fig1}).

\begin{figure}[h]
\begin{picture}(12,5)
\put(0,2.5){\line(1,0){12}}
\put(-0.05,2.38){|}
\put(-0.1,1.8){$p'$}
\put(11.95,2.38){|}
\put(11.9,1.8){$s'$}
\put(1.95,2.38){|}
\put(1.9,1.8){$p$}
\put(4.95,2.38){|}
\put(4.9,1.8){$q$}
\put(9.95,2.38){|}
\put(9.9,1.8){$s$}
\put(6,1){\vector(1,0){4}}
\put(6,1){\vector(-1,0){4}}
\put(5.5,0.4){$\rho(x_1, y_1)$}
\put(1,3.5){\vector(1,0){1}}
\put(1,3.5){\vector(-1,0){1}}
\put(0.5,4){$d_I(p)$}
\put(11,3.5){\vector(1,0){1}}
\put(11,3.5){\vector(-1,0){1}}
\put(10.5,4){$d_I(s)$}
\end{picture}
\caption{}\label{fig1}
\end{figure}
From \eqref{eq7} it follows that
\begin{equation}\label{eq7*}
|p-s|=\rho(x_1, y_1)\,.
\end{equation}
Moreover, we claim that
\begin{equation}\label{eq8}
d_I(p) = |p-p'| \quad \text{and} \quad d_I(s) = |s-s'|\,.
\end{equation}
Indeed, the equality
$$
d_I(p) = \min\{|p-p'|, |p-s'|\}
$$
holds because
$$
d_{I}(p) = \dist (p,\{p',s'\})\,.
$$
Hence
\begin{equation}\label{eq9}
d_I(p) = |p-p'|
\end{equation}
holds if and only if
\begin{equation}\label{eq10}
|p-p'| \leqslant |p-s'|\,.
\end{equation}
Using \eqref{eq7'}, Lemma~\ref{l1} and the inequality $0<k \leqslant 1$ we obtain
\begin{multline*}
|p-s'| = |p-s|+|s-s'| = \rho(x_1,y_1) + k d_D(y_1) \\
\geqslant k(\rho(x_1,y_1) + d_D(y_1)) \geqslant kd_D(x_1) = |p-p'|\,.
\end{multline*}
Inequality \eqref{eq10} follows, so that equality \eqref{eq9} is proved. The equality
$$
d_I(s) = |s-s'|
$$
can be proved similarly.

By definition we have
$$
d_I(q) = \min\{|p'-q|, |s'-q|\}\,.
$$
Hence
\begin{multline}\label{eq11}
d_I(q) = k\Delta(x_1, y_1, z_1) \\
=k\min\{d_{D, A}(x_1) + \rho(x_1, z_1), d_{D, A}(y_1) + \rho(y_1, z_1)\}.
\end{multline}
Using \eqref{eq11}, \eqref{eq8}, \eqref{eq7'} and \eqref{eq7*} we can write \eqref{eq6} in the next form
\begin{multline}\label{eq12}
|p-s|> k^{-1}|p-q| \frac{\sqrt{d_I(s)}}{\sqrt{d_I(q)}} + k^{-1} |q-s| \frac{\sqrt{d_I(p)}}{\sqrt{d_I(q)}} \\
+ 2 k^{-1}\frac{|p-q| |q-s|}{d_I(q)}.
\end{multline}
Since $k^{-1}\geqslant 1$, inequality \eqref{eq12} implies
$$
|p-s|> |p-q| \frac{\sqrt{d_I(s)}}{\sqrt{d_I(q)}} + |q-s| \frac{\sqrt{d_I(p)}}{\sqrt{d_I(q)}}+ 2 \frac{|p-q| |q-s|}{d_I(q)}\,.
$$
The last inequality contradicts the triangle inequality for $h_I\, . $
\end{proof}

%%%%%%%Lem3.17

\begin{lem}\label{l3}
Let $I$ be the interval depicted in Figure~\ref{fig1}. Suppose that $d_I(p)=|p-p'|$, $d_I(s)=|s-s'|$ and $d_I(q)=|p-q|+|p-p'|\, . $ Then the inequality
\begin{equation}\label{eq14}
h_I(p,s)\leqslant h_I(p,q)+h_I(q,s)
\end{equation}
holds.
\end{lem}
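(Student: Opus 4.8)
The plan is to reduce the triangle inequality \eqref{eq14} to an elementary algebraic inequality, which then splits into two one-line estimates.

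First I would normalize by placing $p'$ at the origin and setting
\[
\alpha=d_I(p)=p-p',\qquad u=|p-q|,\qquad v=|q-s|,\qquad \beta=d_I(s)=s'-s,\qquad w=\alpha+u .
\]
Since $q\in(p',s')$ we have $d_I(q)=\min\{q-p',\,s'-q\}=\min\{w,\,v+\beta\}$, so the hypothesis $d_I(q)=|p-q|+|p-p'|=w$ forces the crucial constraint $\beta\geqslant w-v$. Moreover $|p-s|=u+v$, $w-u=\alpha$, and $\alpha\leqslant w$, and all of $\alpha,\beta,u,v,w$ are positive because $p'<p<q<s<s'$.

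Next, using $d_I(q)=w$ and exponentiating \eqref{eq14}, the inequality becomes
\[
1+\frac{2(u+v)}{\sqrt{\alpha\beta}}\ \leqslant\ \left(1+\frac{2u}{\sqrt{\alpha w}}\right)\left(1+\frac{2v}{\sqrt{w\beta}}\right).
\]
Expanding the right-hand side, cancelling the $1$, and multiplying first by $\tfrac12\sqrt{\alpha\beta}$ and then by $w$ (all reversible steps, since $\alpha,\beta,w>0$), one sees this is equivalent to
\[
w(u+v)-2uv\ \leqslant\ u\sqrt{w\beta}+v\sqrt{w\alpha}.
\]
Using $w-u=\alpha$ gives the identity $w(u+v)-2uv=u(w-v)+v\alpha$, so the inequality to be proved reads
\[
u(w-v)+v\alpha\ \leqslant\ u\sqrt{w\beta}+v\sqrt{w\alpha}.
\]

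Finally I would verify this term by term. For the second terms, $v\alpha=v\sqrt{\alpha\cdot\alpha}\leqslant v\sqrt{w\alpha}$ because $\alpha\leqslant w$. For the first terms, if $w\leqslant v$ then $u(w-v)\leqslant 0\leqslant u\sqrt{w\beta}$, while if $w>v$, then from $\beta\geqslant w-v$ and $w\geqslant w-v>0$ we get $w\beta\geqslant(w-v)^2$, hence $u(w-v)\leqslant u\sqrt{w\beta}$. Adding the two estimates yields the displayed inequality, and reversing the reductions proves \eqref{eq14}. The computation is routine; the only point to notice is that the hypothesis $d_I(q)=|p-q|+|p-p'|$ is exactly what produces the bound $\beta\geqslant w-v$, which is precisely the input needed to dispose of the $u(w-v)$ term, and this is the single place where the specific form of the assumption on $d_I(q)$ enters.
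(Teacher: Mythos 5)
Your proof is correct; I checked each reduction and the final term-by-term estimate, and the key constraint $\beta\geqslant w-v$ is indeed exactly the content of the hypothesis $d_I(q)=|p-q|+|p-p'|$ combined with $d_I(q)=\min\{q-p',\,s'-q\}$. Your route is, however, genuinely different from and substantially more direct than the paper's. The paper starts from the same exponentiated form (its inequality \eqref{eq15} is your displayed inequality after multiplication by $w$), but then splits into two cases according to whether there exists $s_0\in(s,s')$ with $|p-q|+|p-p'|=|q-s|+|s-s_0|$: in the first case it passes to the auxiliary interval $J=(p',s_0)$, normalizes to $q=0$, $p'=-1$, $s_0=1$, and proves the resulting inequality \eqref{eq18} via monotonicity of $f(x)=x-\sqrt{x^2-x}$ on $[1,\infty)$; in the second case it discards one term, normalizes to $p'=0$, $q=1$, and argues via concavity of a quadratic in $\sqrt{p}$. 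Your argument replaces all of this with the single algebraic identity $w(u+v)-2uv=u(w-v)+v\alpha$ and two one-line bounds ($v\alpha\leqslant v\sqrt{w\alpha}$ from $\alpha\leqslant w$, and $u(w-v)\leqslant u\sqrt{w\beta}$ from $w\beta\geqslant(w-v)^2$ when $w>v$, trivially otherwise), with no case split on auxiliary points and no calculus. What the paper's approach buys is a geometric picture (shrinking the interval to make the two sides of $q$ symmetric) that perhaps motivated the discovery of the lemma; what yours buys is brevity, transparency about where the hypothesis on $d_I(q)$ is used, and no need for the limiting argument ``$s'\to s$'' in the paper's second case, which as written is slightly awkward. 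Your version could replace the paper's proof with a net gain in clarity.
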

\begin{proof}
Inequality \eqref{eq14} has the following equivalent form
\begin{equation}\label{eq15}
|p-s| \leqslant |p-q| \frac{\sqrt{d_I(s)}}{\sqrt{d_I(q)}} + |q-s| \frac{\sqrt{d_I(p)}}{\sqrt{d_I(q)}} + 2\frac{|p-q| |q-s|}{d_I(q)}\,.
\end{equation}
Suppose that $d_I(q)=|p-q|+|p-p'| < |q-s|+|s-s'|$ and there is $s_0\in (s, s')$ such that
\begin{equation}\label{eq13}
|p-q|+|p-p'| = |q-s|+|s-s_0|\,.
\end{equation}
Let us consider the interval $J=(p', s_0)\, . $

\begin{figure}[h]
%\begin{center}
%\noindent
\begin{picture}(12,2.5)
\put(0,1){\line(1,0){12}}
\put(-0.05,0.88){|}
\put(-0.1,0.3){$p'$}
\put(11.95,0.88){|}
\put(11.9,0.3){$s'$}
\put(1.95,0.88){|}
\put(1.9,0.3){$p$}
\put(4.95,0.88){|}
\put(4.9,0.3){$q$}
\put(9.45,0.88){|}
\put(9.4,0.3){$s$}
\put(11.45,0.88){|}
\put(11.4,0.3){$s_0$}
\put(1,1.5){\vector(1,0){1}}
\put(1,1.5){\vector(-1,0){1}}
\put(0.5,1.8){$d_I(p)$}
\put(10.5,1.5){\vector(1,0){1}}
\put(10.5,1.5){\vector(-1,0){1}}
\put(10,1.8){$d_I(s)$}
\end{picture}
%\end{center}
\label{fig2}
\caption{}
\end{figure}
Then we have $d_J(s) < d_I(s)$ and $d_J(p)=d_I(p)$ and
$$
d_J(q)=d_I(q)=|p-q|+|p-p'|=|q-s|+|s-s_0|.
$$
Consequently, inequality \eqref{eq15} follows from the inequality
\begin{equation}\label{eq16}
|p-s| \leqslant |p-q| \frac{\sqrt{d_J(s)}}{\sqrt{d_J(q)}} + |q-s| \frac{\sqrt{d_J(p)}}{\sqrt{d_J(q)}}
+ 2\frac{|p-q| |q-s|}{d_J(q)}.
\end{equation}
Using a suitable shift and a scaling we may suppose that $q=0$ and $p'=-1$ and $s_0=1\, . $ Consequently \eqref{eq16} obtains the form
\begin{equation}\label{eq17}
|p|+|s| \leqslant |p| \sqrt{1-|s|} + |s|\sqrt{1-|p|} + 2 |s||p|,
\end{equation}
because $\sqrt{d_J(s)} = 1-|s|$, $\sqrt{d_J(p)} = 1-|p|$ and $\sqrt{d_J(0)}=1$ (see Figure 3).

\begin{figure}[h]
\begin{center}
\noindent
\begin{picture}(12,2.5)
\put(0,1){\line(1,0){12}}
\put(-0.05,0.88){|}
\put(-0.1,0.3){$-1$}
\put(11.95,0.88){|}
\put(11.9,0.3){$1$}
\put(1.95,0.88){|}
\put(1.9,0.3){$p$}
\put(5.95,0.88){|}
\put(5.9,0.3){$0$}
\put(9.45,0.88){|}
\put(9.4,0.3){$s$}
%\put(11.45,0.88){|}
%\put(11.4,0.3){$s_0$}
\put(1,1.5){\vector(1,0){5}}
\put(1,1.5){\vector(-1,0){1}}
\put(3,1.8){$d_J(p)$}
\put(7,1.5){\vector(1,0){5}}
\put(7,1.5){\vector(-1,0){1}}
\put(8.5,1.8){$d_J(s)$}
\end{picture}
\end{center}
\caption{}

\label{fig3}
\end{figure}

Inequality \eqref{eq17} holds if and only if
\begin{equation}\label{eq18}
\frac{1}{|p|} + \frac{1}{|s|} - \left(\frac{\sqrt{1-|s|}}{|s|} + \frac{\sqrt{1-|p|}}{|p|}\right) \leqslant 2.
\end{equation}
Let $f(x) = x - \sqrt{x^2 - x}$, $x \geqslant 1\, . $ Then
$$
f'(x) = 1 - \frac{1}{2}\left(x^2-x\right)^{-1/2}(2x-1),
$$
$$
f'(x)\left(x^2-x\right)^{1/2} = \left(x^2-x\right)^{1/2} - \left(x^2-x+\frac{1}{4}\right)^{1/2} .
$$
Hence $f'(x)<0$ for all $x > 1\, . $ It implies that $f$ is decreasing on $[1, \infty)\, . $

Putting $x=\frac{1}{|p|}$ and $x = \frac{1}{|s|}$ we obtain  from the inequality $f(x)\leqslant f(1)$ that

\begin{equation}\label{eq19}
\begin{split}
\frac{1}{|p|} - \frac{\sqrt{1-|p|}}{|p|} \leqslant \frac{1}{1} - \frac{\sqrt{1-1}}{1}=1\,,
\\
\frac{1}{|s|} - \frac{\sqrt{1-|s|}}{|s|} \leqslant \frac{1}{1} - \frac{\sqrt{1-1}}{1}=1\,.
\end{split}
\end{equation}
Inequality \eqref{eq18} follows.

Suppose now that the inequality
\begin{equation}\label{eq20}
|p-q|+|p-p'| < |q-s| + |s-s_0|
\end{equation}
holds for all $s_0 \in (s, s')\, . $ Letting $s' \to s$ we obtain from \eqref{eq15} that
\begin{equation}\label{eq21}
|p-s| \leqslant |q-s| \frac{\sqrt{d_I(p)}}{\sqrt{d_I(q)}} + 2\frac{|p-q| |q-s|}{d_I(q)}\,.
\end{equation}
It is sufficient to show that \eqref{eq21} holds. Using some shift and scaling we can put $q=1$ and $p'=0$ (see Figure 4).

\begin{figure}[h]
\begin{center}
\noindent
\begin{picture}(11,1.5)
\put(0,1){\line(1,0){9.5}}
\put(-0.05,0.88){|}
\put(-0.1,0.3){$0$}
\put(1.95,0.88){|}
\put(1.9,0.3){$p$}
\put(4.95,0.88){|}
\put(4.9,0.3){$1$}
\put(9.45,0.88){|}
\put(9.4,0.3){$s$}
\end{picture}
\end{center}
\caption{}
\end{figure}

Now inequality \eqref{eq21} has the form
\begin{equation}\label{eq22}
s-p \leqslant (s-1) \sqrt{p} + 2(1-p) (s-1)\,,
\end{equation}
because $d_I (q) = d_I (1) = |1-p| + |p-0| = 1\, . $ From \eqref{eq20} we obtain $s\geqslant 2\, . $ Write $p=x^2\, . $ Let us consider the function
\begin{multline*}
F(s,x) = x^2 - s + (s-1)x + 2(s-1) - 2x^2(s-1) \\
= (-2s+3)x^2 + (s-1)x + (s-2)\,.
\end{multline*}
If $s^* \in [2, \infty)$ is given, then the function $y = F(s^*, x)$ is a parabola and $(-2s^* + 3)< 0\, . $ Thus, $F(s^*, x)$ is concave. We have
\begin{equation}\label{eq23}
F(s^*, 0) = s-2 \geqslant 0 \text{ and } F(s^*, 1) = -2s^* + 3 + s^* -1 + s^* - 2 = 0\,.
\end{equation}
Hence \eqref{eq23} implies $F(s^*, x) \geqslant 0$ for all $x \in (0, 1)$ and every $s \geqslant 2\, . $ Inequality \eqref{eq22} follows.

\end{proof}

%%%%%%Proof of Thm 1.1

Theorem \ref{th4} follows from Lemma \ref{l3} and Proposition \ref{p2} with $A=\partial D.$

%{\begin{thm}\label{th4}
%Let $D$ be an open set in a metric space $(X, \rho)$ and let $X\setminus D \neq\varnothing\, . $ Then the function
%$$
%h_{D,c}(x,y) = \log\left(1+c\frac{\rho(x,y)}{d^{\frac{1}{2}}_D(x)d^{\frac{1}{2}}_D(y)}\right)
%$$
%is a metric for every $c \geqslant 2\, . $ The constant $2$ is the best possible here.
%\end{thm}

\begin{comment}
\begin{rem}
If $0<c<2$, then we can find $I \subseteq \mathbb R$ and $x, y, z \in I$ such that
$$
h_{I,c}(x,y) > h_{I,c}(x,z) + h_{I,c}(z,y).
$$
It follows, for example, from \eqref{eq19}.
\end{rem}
\end{comment}

%%%%%%Rem3.29

\begin{rem}
It is possible that
$h_{D,c}$ fails to be a metric if $0<c<2\, . $ To this end we consider $D=\mathbb{B}^2$, and fix $c\in (0,2)\, . $ Then for $0<r<1$ the triangle inequality $2h_{D,c}(0,r)\geq h_{D,c}(-r,r)$ is equivalent to
\beq\label{htriangle}
\frac{2}{\sqrt{1-r}}+\frac{cr}{1-r}\geq\frac{2}{1-r}.
\eeq
Now for large enough $r$, \eqref{htriangle} yields $c\geq 2$, which is contradiction.
\end{rem}

%%%%%%Rem3.31

\begin{rem}
If a metric space $(X,\rho)$ is connected, then
\begin{equation*}\label{eq26}
\partial D\ne\varnothing
\end{equation*}
holds for every nonempty proper open subset $D$ of $X.$
Consequently, $h_{D, c}$ is a metric for all connected metric spaces $X$, proper open $D\subseteq X$ and $c\ge 2.$
\end{rem}
%%%%%%%%%%%%%%%%%%%%%%%%%%%%%%%%%%%%%%%%%%%%%%%%%%%%%%%%%%%%%%%%%%%%%%%%
%%%%%%%%%%%%%%%%%%%%%%%%%%%%%%%%%%%%%%%%%%%%%%%%%%%%%%%%%%%%%%%%%%%%%
%%%%%%%%%%%%%%%%%%%%%%%%%%%%%%%%%%%%%%%%%%%%%%%%%%%%%%%%%%%%%%%%%%%%%

\section{Comparison results for $h_{G,c}$}\label{section4}

%%%%%%%%%%%%%%%%%%%%%%%%%%%%%%%%%%%%%%%%%%%%%%%%%%%%%%%%%%%%%%%%%%%%%%%%
%%%%%%%%%%%%%%%%%%%%%%%%%%%%%%%%%%%%%%%%%%%%%%%%%%%%%%%%%%%%%%%%%%%%%
%%%%%%%%%%%%%%%%%%%%%%%%%%%%%%%%%%%%%%%%%%%%%%%%%%%%%%%%%%%%%%%%%%%%%
In this section we shall study the class of uniform domains which is recurrent in geometric function theory of Euclidean spaces \cite{gh,vu}. In the planar case so-called quasidisks, i.e., simply connected domains in the plane bounded by curve $\Gamma$, which is the image of the unit circle under a quasiconformal homeomorphism of $\R$ onto itself, form a well-known class of uniform domains.

We show that in uniform domains the $h_{G,c}$ and $j_G$ metrics are comparable. For this purpose we introduce the quasihyperbolic metric. Using these comparison results we then proceed to prove that $h_{G,c}$-metric is quasi-invariant under quasiconformal mappings.

For some basic facts about quasiconformal maps the reader is referred to \cite{v1}.

%%%%%%%%%%%%%%%%%%%%%%%%%%%%%%%%%%%%%%%%%%%%%%%%%%%%%%%
\begin{nonsec}{\bf Quasihyperbolic metric.}
Let $G$ be a proper subdomain of ${\mathbb R}^n\,\, . $ For all $x,\,y\in G$, the quasihyperbolic metric $k_G$ is defined as
$$k_G(x,y)=\inf_{\gamma}\int_{\gamma}\frac{1}{d(z,\partial G)}|dz|,$$
where the infimum is taken over all rectifiable arcs $\gamma$ joining $x$ and $y$ in $G$ \cite{gp}.

\end{nonsec}

It is a well-known basic fact \cite{gp}, that for all $x, y\in G$
\beq\label{qhj}
k_G(x,y)\geq j_G(x,y).
\eeq

%%%%%%%Defn4.3

\begin{defn}\label{uni}
A domain $G\subsetneq \Rn$ is said to be uniform, if there exists a constant $U\geq 1$ such that for all $x, y\in G$
\[
k_G(x,y)\leq U j_G(x,y).
\]
\end{defn}

%%%%%%%Lem4.4

\begin{lem}\label{4.4}
Let $G\subsetneq\Rn$ be a domain. Then for $c>0$ and all $x, y\in G$, we have
\begin{enumerate}
\item
\[
\frac{c}{2(1+c)}j_G(x,y)\leq \log\left(1+2c\sh{\frac{j_G(x,y)}{2}}\right)\leq h_{G,c}(x,y)\leq c j_G(x,y).
\]

If $x\in G$, $\lambda\in (0,1)$, $y\in \Bn(x,\lambda d(x))$ then
\item
\[
\frac{1-\lambda}{1+\lambda}j_G(x,y)\leq h_{G,c}(x,y).
\]
\end{enumerate}
\end{lem}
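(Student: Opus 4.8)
The plan is to prove the two inequalities of Lemma~\ref{4.4} separately, with Part~(1) resting on Proposition~\ref{2.8} together with two elementary estimates of the function $h_{G,c}$ against the expression $\log(1+2c\sh(j_G(x,y)/2))$, and Part~(2) being a direct algebraic manipulation using the hypothesis $|x-y| < \lambda d(x)$.

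For Part~(1), first I would observe that, writing $j = j_G(x,y)$, we may assume $d(x) \le d(y)$ so that $e^{j} - 1 = |x-y|/d(x)$. The key is to sandwich the quantity $\rho(x,y)/\sqrt{d_D(x)d_D(y)} = |x-y|/\sqrt{d(x)d(y)}$ between $\sh(j/2)$ and $2\sh(j/2)$: since $2\sh(j/2) = e^{j/2} - e^{-j/2}$ and $e^{j} = 1 + |x-y|/d(x)$, one checks that $\sh(j/2) \le |x-y|/\sqrt{d(x)d(y)} \le 2\sh(j/2)$ by the same type of computation used in Lemma~\ref{set5} and Proposition~\ref{4}(2) — indeed the right-hand bound comes from $d(y) \ge d(x)$ and a triangle-inequality estimate $d(y) \le d(x) + |x-y|$, paralleling the proof of the first inequality in \eqref{2.10*}. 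Applying $\log(1+c\,\cdot)$, which is increasing, then gives
\[
\log\!\left(1+2c\sh\tfrac{j}{2}\right) \ge \log\!\left(1 + 2c\sh\tfrac{j}{2}\right) \ \text{lower bound is automatic, and}\quad h_{G,c}(x,y) \ge \log\!\left(1+c\sh\tfrac{j}{2}\right),
\]
so I would instead aim directly: $h_{G,c}(x,y) = \log(1 + c|x-y|/\sqrt{d(x)d(y)}) \ge \log(1 + c\sh(j/2))$ from the lower sandwich bound, but to land the middle inequality $\log(1+2c\sh(j/2)) \le h_{G,c}(x,y)$ exactly as stated I need the \emph{upper} sandwich bound $|x-y|/\sqrt{d(x)d(y)} \ge 2\sh(j/2)$ — wait, that is the wrong direction, so the correct reading is that one proves $2\sh(j/2) \le |x-y|/\sqrt{d(x)d(y)}$ is false in general; rather the chain is obtained via $f(j) = \log(1+2c\sh(j/2)) \le c j$ (Proposition~\ref{2.8}) combined with $f(j) \le h_{G,c}$ following from $2\sh(j/2) \le$ the $h$-argument, which does hold because $e^{j}-1 = |x-y|/d(x) \ge |x-y|/\sqrt{d(x)d(y)}$ only when $d(y)\le d(x)$; so the clean route is: $2\sh(j/2)\le e^{j}-1 = |x-y|/\min\{d(x),d(y)\} \ge |x-y|/\sqrt{d(x)d(y)}$ gives nothing, hence the honest statement is $2\sh(j/2) = e^{j/2}-e^{-j/2} \le e^{j}-1$, and then $e^j - 1 = |x-y|/d(x) \cdot \tfrac{d(x)}{\min} $... — the point I will nail down carefully is that $2\sh(j/2) \le |x-y|/\sqrt{d(x)d(y)}$ requires $d(x)d(y) \le (\min\{d(x),d(y)\})^2$ times a harmless factor, which follows from $\max\le \min + |x-y|$. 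The rightmost inequality $h_{G,c}(x,y) \le c\,j_G(x,y)$ is immediate from $|x-y|/\sqrt{d(x)d(y)} \le |x-y|/\min\{d(x),d(y)\} = e^{j}-1$ and the concavity estimate $\log(1+c(e^j-1)) \le c j$ (equivalently $1 + c(e^j-1) \le e^{cj}$, which for $c\ge 1$ is convexity of $t\mapsto e^{tj}$ and for $c \in (0,1)$ follows likewise), while the leftmost inequality is Proposition~\ref{2.8}.

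For Part~(2), I would set $t = |x-y|$, $u = d(x)$, so $d(y) \ge u - t > (1-\lambda)u$ and $d(y) \le u + t < (1+\lambda)u$, hence $\sqrt{d(x)d(y)} < \sqrt{(1+\lambda)}\,u$ and $\min\{d(x),d(y)\} > (1-\lambda)u$. Then
\[
h_{G,c}(x,y) \ge \log\!\left(1 + 2\,\frac{t}{\sqrt{d(x)d(y)}}\right), \qquad j_G(x,y) = \log\!\left(1 + \frac{t}{\min\{d(x),d(y)\}}\right),
\]
and since $t < \lambda u$ one has $t/\sqrt{d(x)d(y)}$ comparable to $t/u$ with the explicit constants above; the inequality $\tfrac{1-\lambda}{1+\lambda} j_G(x,y) \le h_{G,c}(x,y)$ should then follow from the elementary fact that $\log(1+as) \ge a\log(1+s)$ whenever $a \le 1$ (wait — that is backwards; rather $\log(1+as)\le a\log(1+s)$ for $a\ge 1$ and $\ge$ for $a\le 1$), applied after bounding the ratio of the two logarithmic arguments. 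Concretely I would show $\big(1+\tfrac{t}{\min}\big)^{(1-\lambda)/(1+\lambda)} \le 1 + \tfrac{2t}{\sqrt{d(x)d(y)}}$ using $\tfrac{t}{\min} < \tfrac{t}{(1-\lambda)u}$ and the bound $(1+v)^\alpha \le 1 + \alpha v$ for $\alpha \in (0,1)$, $v\ge 0$.

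The main obstacle is Part~(1): getting the precise constant chain right, in particular verifying that $\log\big(1+2c\sh(j_G(x,y)/2)\big) \le h_{G,c}(x,y)$, which amounts to the sharp inequality $2\sh(j/2) \le \rho(x,y)/\sqrt{d_D(x)d_D(y)}$ — and this does require the triangle-inequality bound $\max\{d(x),d(y)\} \le \min\{d(x),d(y)\} + \rho(x,y)$ in an essential way, so I would isolate it as the crux, proving it along the lines of the computation in the proof of Lemma~\ref{set5}. Everything else is bookkeeping with monotone functions and Bernoulli-type inequalities already invoked elsewhere in the paper.
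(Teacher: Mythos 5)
Your proposal follows essentially the same route as the paper's proof: the leftmost inequality is Proposition~\ref{2.8}, the middle one reduces to
$2\sh\frac{j_G(x,y)}{2}=t/\sqrt{1+t}\le |x-y|/\sqrt{d(x)d(y)}$ with $t=|x-y|/\min\{d(x),d(y)\}$, which is exactly the triangle-inequality bound $\max\{d(x),d(y)\}\le\min\{d(x),d(y)\}+|x-y|$ that you correctly isolate as the crux in your final paragraph; and the rightmost inequality together with part~(2) come from $\min\{d(x),d(y)\}\le\sqrt{d(x)d(y)}\le\frac{1+\lambda}{1-\lambda}\min\{d(x),d(y)\}$ plus a Bernoulli-type estimate. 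Two concrete points need repair, however. First, your parenthetical claim that $1+c(e^{j}-1)\le e^{cj}$ ``for $c\in(0,1)$ follows likewise'' is false: by convexity of $c\mapsto e^{cj}$ the graph lies \emph{below} the chord on $[0,1]$, so the inequality reverses there, and indeed $h_{G,c}\le c\,j_G$ genuinely fails for $0<c<1$ (take $d(x)=d(y)$ and apply Bernoulli the other way). No argument can rescue that case; the paper's own proof carries the same implicit restriction, since the Bernoulli inequality it cites requires the exponent to be at least $1$, so you should simply record the hypothesis $c\ge 1$ for that inequality rather than assert the small-$c$ case. Second, in part~(2) you write $h_{G,c}(x,y)\ge\log\bigl(1+2t/\sqrt{d(x)d(y)}\bigr)$, which presupposes $c\ge 2$; the constant in that step should be $c$, and the concluding step $\log(1+c\alpha v)\ge\alpha\log(1+v)$ with $\alpha=\frac{1-\lambda}{1+\lambda}$ again needs $c\ge 1$. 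Finally, the self-contradictory back-and-forth in your part~(1) (``wait, that is the wrong direction'', ``is false in general'') must be excised in any written version: the statement you eventually settle on is the correct one, and it is all that is needed.
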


\begin{proof}
(1) Because $\sqrt{d(x)d(y)}\geq \min\{d(x), d(y)\}$, the Bernoulli inequality \cite[(3.6)]{vu} yields
\[
h_{G,c}(x,y)\leq \log\left(1+c\frac{|x-y|}{\min\{d(x), d(y)\}}\right)\leq c j_G(x,y).
\]
Next, by the triangle inequality we have $d(y)\leq d(x)+|x-y|$ and hence, for $d(x)\leq d(y),$
\bequu
(e^{h_{G,c}(x,y)}-1)/c &\geq& \frac{|x-y|}{\sqrt{d(x)(d(x)+|x-y|)}}\\
&=& \frac{t}{\sqrt{1+t}}=2\sh{\frac{j_G(x,y)}{2}}\quad \quad \mbox{where}\quad t=\frac{|x-y|}{d(x)}.
\eequu
If $d(y)< d(x)$ the argument is similar.

The lower bound follows from the Proposition \ref{2.8}.

(2) Observe that
\bequu
\sqrt{d(x)d(y)}&\leq & \max\{d(x), d(y)\} \leq (1+\lambda) d(x)\\
&= & \frac{1+\lambda}{1-\lambda}\cdot (1-\lambda) d(x)\leq  \frac{1+\lambda}{1-\lambda}\min\{d(x), d(y)\}
\eequu
and hence by Bernoulli's inequality
\[
h_{G,c}(x,y) \geq  \log\left(1+\frac{1-\lambda}{1+\lambda} \cdot \frac{|x-y|}{\min\{d(x), d(y)\}}\right)\geq \frac{1-\lambda}{1+\lambda} j_G(x,y).
\]
\end{proof}

%%%%%%%Cor4.5

\begin{cor}\label{4.5}
Let $G\subsetneq \Rn$ be a uniform domain. Then there exists a constant $d$ such that
\[
d k_G(x,y)\leq h_{G,c}(x,y)\leq c k_G(x,y)
\]
for all $x,y\in G$ and all $c>0 \, . $
\end{cor}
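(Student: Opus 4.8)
The plan is to sandwich $h_{G,c}$ between multiples of $k_G$ by chaining the estimates of Lemma~\ref{4.4}(1) with the two inequalities that relate $j_G$ and $k_G$: the universal bound \eqref{qhj}, i.e. $k_G(x,y)\geq j_G(x,y)$, and the uniformity hypothesis (Definition~\ref{uni}), i.e. $k_G(x,y)\leq U j_G(x,y)$ for some $U\geq 1$. Both bounds in the corollary then fall out without any new computation.

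For the upper bound I would simply write
\[
h_{G,c}(x,y)\leq c\,j_G(x,y)\leq c\,k_G(x,y),
\]
where the first inequality is the right-hand estimate of Lemma~\ref{4.4}(1) and the second is \eqref{qhj}. For the lower bound I would use the left-hand estimate of Lemma~\ref{4.4}(1), namely $h_{G,c}(x,y)\geq \tfrac{c}{2(1+c)}j_G(x,y)$, together with uniformity in the rearranged form $j_G(x,y)\geq \tfrac{1}{U}k_G(x,y)$, to obtain
\[
h_{G,c}(x,y)\geq \frac{c}{2(1+c)}\,j_G(x,y)\geq \frac{c}{2U(1+c)}\,k_G(x,y).
\]
Thus the corollary holds with $d=\dfrac{c}{2U(1+c)}$.

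There is no genuine obstacle here; the result is a direct assembly of previously established facts. The only point worth flagging is that the constant $d$ must depend on both $c$ and the uniformity constant $U$ (indeed $h_{G,c}(x,y)\to 0$ as $c\to 0$ while $k_G(x,y)$ is fixed, so no choice of $d$ can work simultaneously for all $c$); the statement should therefore be read with $d=d(c,U)$, e.g. $d=\tfrac{c}{2U(1+c)}$. One could alternatively retain the sharper middle term $\log\!\bigl(1+2c\sh\tfrac{j_G(x,y)}{2}\bigr)$ from Lemma~\ref{4.4}(1), but for a conclusion that is linear in $k_G$ the cruder linear lower bound is all that is needed.
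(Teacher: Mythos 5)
Your proof is correct and is essentially identical to the paper's: both obtain the upper bound from $h_{G,c}\leq c\,j_G\leq c\,k_G$ and the lower bound by chaining Lemma~\ref{4.4}(1) with the uniformity constant $U$, yielding $d=\frac{c}{2U(1+c)}$. Your side remark that $d$ necessarily depends on $c$ (so the statement's quantifier order is loose) is a fair and accurate observation.
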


\begin{proof}
By \ref{uni} and \ref{4.4} (1) there exist constants $U>1$ and $c>1$ such that
\[
k_G(x,y)\leq U j_G(x,y)\leq U \left(\frac{2(1+c)}{c}\right) h_{G,c}(x,y).
\]
The second inequality follows from \ref{4.4} (1) and \eqref{qhj}.
\end{proof}
%For some basic information about quasiconformal mappings the reader is referred to \cite{v1}.

%%%%%%%Thm4.6

\begin{thm}\label{rhoh}
Let $G \in \{  {\mathbb B}^n , {\mathbb H}^n\}\,,$ and let
 $\rho_G$ stand for the respective hyperbolic metric. If $c \ge 2\,,$ then for all $x,y\in G$
 $$\frac{1}{c} h_{G,c}(x,y)\leq \rho_G(x,y) \le 2h_{G,c}(x,y).$$
\end{thm}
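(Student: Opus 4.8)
The plan is to reduce both inequalities to the explicit formulas relating $h_{G,c}$ and $\rho_G$ that were already established in Proposition \ref{4}, and then to squeeze the resulting expressions between elementary functions of $\rho_G(x,y)$. Write $e^{h_{G,c}(x,y)}-1 = cE$, where $E$ denotes $\sqrt{2(\ch\rho_G(x,y)-1)}$ in the case $G={\mathbb H}^n$ (Proposition \ref{4}(1)), and where, in the case $G={\mathbb B}^n$, Proposition \ref{4}(2) gives $\sh(\rho_G(x,y)/2) \le E \le 2\sh(\rho_G(x,y)/2)$. In either case, using $\ch t - 1 = 2\sh^2(t/2)$ we have $\sqrt{2(\ch\rho_G-1)} = 2\sh(\rho_G/2)$, so both cases are controlled by the single quantity $2\sh(\rho_G(x,y)/2)$: precisely, $\sh(\rho_G/2) \le E \le 2\sh(\rho_G/2)$ with the lower bound an equality in the half-space case. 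Hence $h_{G,c}(x,y) = \log(1+cE)$ with $E$ in that range, and the theorem amounts to proving, for $t = \rho_G(x,y) > 0$ and $c \ge 2$, that
\[
\tfrac{1}{c}\log\bigl(1+cE\bigr) \le t \le 2\log\bigl(1+cE\bigr), \qquad \sh(t/2) \le E \le 2\sh(t/2).
\]

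For the left-hand inequality $h_{G,c}(x,y) \le c\,\rho_G(x,y)$: since $E \le 2\sh(t/2) \le e^{t/2} - e^{-t/2} \le e^{t}$ actually a cleaner route is $cE \le 2c\sh(t/2) < ce^{t/2}$, but the sharp statement wanted is $\log(1+cE) \le ct$. Here I would invoke the Bernoulli-type inequality $\log(1+cE) \le c\log(1+E)$ valid for $c \ge 1$ (used repeatedly in the excerpt), reducing matters to $\log(1+E) \le t$, i.e. $E \le e^t - 1$. Since $E \le 2\sh(t/2) = e^{t/2}-e^{-t/2} \le e^t - 1$ (the last step because $e^{t/2}-e^{-t/2} \le e^t-1 \iff e^{-t/2}+1 \le e^{t/2}+e^{-t}$, which holds for $t\ge 0$ as both sides agree at $t=0$ and the right side grows faster), the bound follows. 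For the right-hand inequality $\rho_G(x,y) \le 2h_{G,c}(x,y) = 2\log(1+cE)$: this is equivalent to $e^{t/2} \le 1 + cE$, and since $c \ge 2$ and $E \ge \sh(t/2) = \tfrac12(e^{t/2}-e^{-t/2})$ we get $1+cE \ge 1 + 2\sh(t/2) = 1 + e^{t/2} - e^{-t/2} \ge e^{t/2}$ because $1 \ge e^{-t/2}$ for $t \ge 0$. This is exactly where the hypothesis $c \ge 2$ is used, and where the lower bound $E \ge \sh(\rho_G/2)$ from Proposition \ref{4}(2) (equality in the ${\mathbb H}^n$ case) is essential.

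The main obstacle is cosmetic rather than conceptual: making sure the chain of elementary inequalities $2\sh(t/2) \le e^t - 1$ and $1 + 2\sh(t/2) \ge e^{t/2}$ is stated cleanly, and correctly citing the logarithmic Bernoulli inequality $\log(1+cx) \le c\log(1+x)$ for $c\ge 1$ that the paper has been using (e.g.\ \cite[(3.6)]{vu}). One should also note that the case $t = \rho_G(x,y) = 0$, i.e.\ $x = y$, is trivial since then $h_{G,c}(x,y)=0$ as well, so the displayed inequalities hold with all terms zero; thus restricting to $t>0$ in the computation above loses nothing. No delicate optimization or calculus is needed — everything reduces to monotonicity of $e^{t/2}$ and the two one-line estimates above.
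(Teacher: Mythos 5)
Your proof is correct, but it takes a genuinely different route from the paper's. The paper treats the two domains separately with explicit formulas: for $\Bn$ it writes $\rho_{\Bn}(x,y)=2\log\bigl((|x-y|+A[x,y])/\sqrt{(1-|x|^2)(1-|y|^2)}\bigr)$ and verifies $\rho_{\Bn}\le 2h_{\Bn,c}$ by the pointwise estimate $A[x,y]\le\sqrt{(1-|x|^2)(1-|y|^2)}+(c-1)|x-y|$; for $\Hn$ it uses the $\arch$ representation together with an inequality quoted from \cite{vu}; and the lower bound $\tfrac1c h_{G,c}\le\rho_G$ is obtained by citing $j_G\le\rho_G$ from \cite{vu}, \cite{avv} and combining with Lemma \ref{4.4}(1). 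You instead funnel everything through Proposition \ref{4}, reducing both cases to the single sandwich $\sh(t/2)\le E\le 2\sh(t/2)$ for $E=(e^{h_{G,c}(x,y)}-1)/c$ and $t=\rho_G(x,y)$, and then finish with Bernoulli's inequality plus two one-line hyperbolic estimates. This is more self-contained (no external lemmas needed for the lower bound), treats $\Bn$ and $\Hn$ uniformly, and makes transparent exactly where $c\ge 2$ enters, namely in $1+2\sh(t/2)=1+e^{t/2}-e^{-t/2}\ge e^{t/2}$. Two small blemishes, neither fatal: in the half-space case Proposition \ref{4}(1) gives $E=2\sh(t/2)$, so it is the \emph{upper} bound of your sandwich that is an equality there, not the lower one as you state (both inequalities still hold, so nothing breaks); and your justification that ``the right side grows faster'' in $e^{-t/2}+1\le e^{t/2}+e^{-t}$ is slightly loose since the first derivatives also agree at $t=0$ --- the cleanest fix is the factorization $e^{t/2}+e^{-t}-e^{-t/2}-1=(e^{t/2}-e^{-t/2})(1-e^{-t/2})\ge 0$ for $t\ge 0$.
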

\begin{proof}
If $G=\Bn$, then by \eqref{tro}, \cite[(7.38),(7.53)]{avv},
\[
\rho_{\Bn}(x,y)=2\log{\frac{|x-y|+A[x,y]}{\sqrt{(1-|x|^2)(1-|y|^2)}}},
\]
\[
A[x,y]^2=|x-y|^2+(1-|x|^2)(1-|y|^2).
\]
The inequality $\rho_{\Bn}(x,y)\leq 2 h_{\Bn,c}(x,y)$ is equivalent to
\[
\quad \frac{|x-y|+A[x,y]}{\sqrt{(1-|x|^2)(1-|y|^2)}}\leq 1+c\frac{|x-y|}{\sqrt{(1-|x|)(1-|y|)}}.
\]
This inequality holds because
\[
A[x,y]\leq \sqrt{(1-|x|^2)(1-|y|^2)}+(c-1)|x-y|,
\]
which follows from the fact that $c\geq 2$, and $\sqrt{a^2+b^2}\leq a+b$, where $a=|x-y|$, and $b=\sqrt{(1-|x|^2)(1-|y|^2)}\, . $

If $G=\Hn$, then by \eqref{cro} and \cite[7.53]{avv},
\[
\rho_{\Hn}(x,y)=\log\left(1+\sqrt{u^2-1}\right),
\]

\[
u=1+\frac{|x-y|^2}{2x_n y_n}.
\]
By \cite[(2.14)]{vu}
\bequu
\rho_{\Hn}(x,y) &\leq & 2\log\left(1+\sqrt{2(u-1)}\right)\\
& = & 2\log\left(1+\frac{|x-y|}{\sqrt{x_n y_n}}\right)\\
& \leq & 2 \log\left(1+c \frac{|x-y|}{\sqrt{x_n y_n}}\right)\\
& = & 2h_{\Hn, c}(x,y).
\eequu
The lower bound follows from  \cite[Lemma 2.41(2)]{vu},  \cite[Lemma 7.56]{avv} (see also \cite[Lemma 3.2]{chkv}), and Lemma \ref{4.4}(1).
\end{proof}

F. W. Gehring and B. G. Osgood \cite{go} proved the following result, see also \cite[12.19]{vu1}.

%%%%%%%Thm4.7

\begin{thm}\label{12.19}
Let $f:G\to fG$ be a $K$-quasiconformal mapping where $G$ and $fG$ are proper subdomains of $\Rn\, . $ Then
\[
k_{fG}\left(f(x),f(y)\right)\leq c_1 \max\left\{k_G(x,y)^{\alpha},k_G(x,y)\right\}
\]
holds for all $x, y\in G$ where $\alpha=K_I(f)^{1/(1-n)}$ and $c_1$ depends only on $K_O(f)\, . $ Here $K_O$ and $K_I$ are the outer and inner dilatations.
\end{thm}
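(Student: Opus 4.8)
The plan is to prove the inequality first in a \emph{local} form — for points $x,y\in G$ with $k_G(x,y)$ below a fixed absolute constant $c_0$ — and then to obtain the general statement by a chaining argument along a quasihyperbolic near-geodesic.

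For the local step I would fix $c_0>0$ small (say $c_0=\log(3/2)$) and assume $k_G(x,y)\le c_0$. By \eqref{qhj} this forces $j_G(x,y)\le c_0$, hence $|x-y|/d(x)\le e^{c_0}-1\le\tfrac12$, so $y\in B(x,\tfrac12 d(x))$ while $B(x,d(x))\subseteq G$; in this regime $k_G(x,y)$, $j_G(x,y)$ and $|x-y|/d(x)$ are all comparable with absolute constants. Now the quasiconformal distortion theorem for $f\colon G\to fG$ — a Schwarz-type lemma proved via moduli of curve families, see \cite{v1}, \cite{vu1}, \cite{avv} — supplies a distortion function $\psi_{n,K}$ with $\psi_{n,K}(t)\to 0$ as $t\to 0^{+}$ and $\psi_{n,K}(t)\le C_0\,t^{\alpha}$ on $[0,\tfrac12]$, where $\alpha=K_I(f)^{1/(1-n)}$ and $C_0=C_0(n,K_O(f))$, such that
\[
\frac{|f(x)-f(y)|}{d(f(x),\partial fG)}\ \le\ \psi_{n,K}\!\left(\frac{|x-y|}{d(x)}\right).
\]
Shrinking $c_0$ if necessary, the right-hand side is $\le\tfrac12$, so $d(f(y),\partial fG)\ge\tfrac12 d(f(x),\partial fG)$ and $f(y)$ lies in the ball $B\!\left(f(x),\tfrac12 d(f(x),\partial fG)\right)\subseteq fG$, on which $k_{fG}$ is comparable to $j_{fG}$ with an absolute constant. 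Hence, using again $|x-y|/d(x)\le e^{j_G(x,y)}-1\le e^{k_G(x,y)}-1\le(e-1)k_G(x,y)$,
\[
k_{fG}(f(x),f(y))\ \le\ C_1\,j_{fG}(f(x),f(y))\ \le\ C_2\,\psi_{n,K}\!\left(\frac{|x-y|}{d(x)}\right)\ \le\ C_3\!\left(\frac{|x-y|}{d(x)}\right)^{\alpha}\ \le\ C_4\,k_G(x,y)^{\alpha},
\]
with $C_1,\dots,C_4$ depending only on $n$ and $K_O(f)$.

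For the global step, given arbitrary $x,y\in G$ I would pick a curve $\gamma$ from $x$ to $y$ of quasihyperbolic length within $\varepsilon$ of $k_G(x,y)$ (a genuine quasihyperbolic geodesic also works here), place points $x=z_0,z_1,\dots,z_N=y$ along $\gamma$ at quasihyperbolic arclength $c_0$ apart — so that $k_G(z_{i-1},z_i)\le c_0$ and $N\le k_G(x,y)/c_0+1$ — and sum the local estimates via the triangle inequality for $k_{fG}$:
\[
k_{fG}(f(x),f(y))\ \le\ \sum_{i=1}^{N}k_{fG}(f(z_{i-1}),f(z_i))\ \le\ C_4\sum_{i=1}^{N}k_G(z_{i-1},z_i)^{\alpha}\ \le\ C_4\,c_0^{\alpha}\,N.
\]
This yields $k_{fG}(f(x),f(y))\le c_1\max\{1,k_G(x,y)\}$; in the range $k_G(x,y)\le c_0$ the local estimate already gives the sharper bound $C_4\,k_G(x,y)^{\alpha}$, and since $0<\alpha\le 1$ the two regimes combine to $k_{fG}(f(x),f(y))\le c_1\max\{k_G(x,y)^{\alpha},k_G(x,y)\}$ with $c_1=c_1(n,K_O(f))$.

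The hard part is the local distortion inequality: producing a modulus of continuity for $f$ in the \emph{intrinsic} (relative-distance, hence quasihyperbolic) scale, with the sharp Hölder exponent $\alpha=K_I(f)^{1/(1-n)}$, and with the relative distance measured against $\partial fG$ itself rather than against the boundary of the image of a subdomain. This rests on the theory of moduli of curve families and capacities of condensers, together with the sharp bounds for the Grötzsch and Teichmüller ring functions. By comparison the chaining step is purely metric and routine, once one knows that short quasihyperbolic near-geodesics exist in every proper subdomain of $\Rn$.
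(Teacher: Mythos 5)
The paper does not actually prove Theorem \ref{12.19}: it is quoted from Gehring and Osgood \cite{go} (see also \cite[12.19]{vu1}), so there is no internal proof to compare yours against. Your outline reconstructs the standard argument from those sources and is sound in structure: (i) a local step in which, for $k_G(x,y)$ below a fixed small constant, the quasiconformal Schwarz lemma yields $|f(x)-f(y)|/d(f(x),\partial fG)\le C\,(|x-y|/d(x))^{\alpha}$ with $\alpha=K_I(f)^{1/(1-n)}$, whence $k_{fG}(f(x),f(y))\le C'\,k_G(x,y)^{\alpha}$ because $k$, $j$ and the relative distance are mutually comparable on Whitney-type balls (your verifications that $y\in B(x,\tfrac12 d(x))$ and $d(f(y))\ge\tfrac12 d(f(x))$ are correct, and the direction of the comparison with $\partial fG$ is indeed the easy one, since $f(B(x,d(x)))\subseteq fG$); (ii) a chaining step along a quasihyperbolic near-geodesic cut into arcs of quasihyperbolic length at most $c_0$, giving a bound of the form $Ak_G(x,y)+B$, which combines with (i) and $0<\alpha\le 1$ into $c_1\max\{k_G(x,y)^{\alpha},k_G(x,y)\}$. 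The one caveat is that essentially all of the analytic content of the theorem sits inside the local distortion inequality that you invoke but do not prove: the sharp exponent $K_I(f)^{1/(1-n)}$ and the dependence of the constant on $K_O(f)$ alone come from the capacity estimates for the Gr\"otzsch and Teichm\"uller rings. So what you have is a correct reduction of the theorem to that lemma rather than a self-contained proof; given that the paper itself treats the whole theorem as a citation, that level of detail is appropriate here.
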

Applying Theorem \ref{12.19} we shall now prove the following two results.

%%%%%%%Lem4.8

\begin{lem}\label{4.7}
Let $f:G\to fG$ be a homeomorphism, where $G$ and $fG$ are proper subdomains of $\Rn\,,$ and suppose that for some $a\in (0,1)$
\[
j_{fG}\left(f(x),f(y)\right)\leq \frac{1}{a}\max\left\{j_G(x,y), j_G(x,y)^{a}\right\}
\]
for all $x, y\in G\, . $ Then for all $c>0$ and for all $x, y\in G$
\[
h_{fG,c}(f(x),f(y))\leq \frac{1}{A}\max \left\{h_{G,c}(x,y), h_{G,c}(x,y)^{a}\right\}
\]
where $A=A(c)\in (0,1)\,.$
\end{lem}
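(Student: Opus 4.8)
The strategy is to transfer the hypothesis on $j$ to a statement about $h_{G,c}$ by sandwiching $h_{G,c}$ between multiples of $j_G$ using Lemma~\ref{4.4}(1), then controlling how the $\max\{t,t^a\}$ distortion interacts with these linear two-sided bounds. First I would record the two key consequences of Lemma~\ref{4.4}(1): there are positive constants $b_1=b_1(c)=\frac{c}{2(1+c)}$ and $b_2(c)=c$ with
\[
b_1 \, j_G(x,y)\leq h_{G,c}(x,y)\leq b_2 \, j_G(x,y)
\]
for all $x,y\in G$, and likewise for $fG$ with the \emph{same} constants (since $b_1,b_2$ depend only on $c$). Thus from the hypothesis
\[
j_{fG}(f(x),f(y))\leq \tfrac{1}{a}\max\{ j_G(x,y), j_G(x,y)^a\}
\]
I would estimate
\[
h_{fG,c}(f(x),f(y))\leq b_2\, j_{fG}(f(x),f(y)) \leq \frac{b_2}{a}\max\{ j_G(x,y), j_G(x,y)^a\}
\leq \frac{b_2}{a}\max\Bigl\{ \tfrac{1}{b_1}h_{G,c}(x,y),\ \bigl(\tfrac{1}{b_1}h_{G,c}(x,y)\bigr)^a\Bigr\}.
\]

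\textbf{Handling the $\max$ and the power.} The remaining task is purely elementary: I must absorb the constant $\tfrac{1}{b_1}$ sitting inside the power and the factor $\tfrac{b_2}{a}$ outside, and rewrite everything in the stated form $\tfrac1A\max\{h, h^a\}$ with $A=A(c)\in(0,1)$. Writing $s=h_{G,c}(x,y)\geq 0$, I have $\bigl(\tfrac{1}{b_1}s\bigr)^a = b_1^{-a} s^a$ and of course $\tfrac{1}{b_1}s = b_1^{-1}s$, so
\[
h_{fG,c}(f(x),f(y))\leq \frac{b_2}{a}\,\max\{b_1^{-1},b_1^{-a}\}\,\max\{s, s^a\}.
\]
Setting $C = \tfrac{b_2}{a}\max\{b_1^{-1},b_1^{-a}\}$ gives the bound $\tfrac1A\max\{h_{G,c}(x,y),h_{G,c}(x,y)^a\}$ with $A = 1/C$, a constant depending only on $c$ and $a$, hence on $c$ once $a$ is fixed (here $a$ comes from the quasiconformality data and is regarded as given). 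Since $C\geq 1$ — because $b_2/a\geq b_2\geq c$ can be arranged, or more simply because the inequality is only asserted up to enlarging the constant — we get $A\in(0,1)$; if $C$ happened to be less than $1$ one simply replaces $A$ by $\min\{A,\tfrac12\}$, which only weakens the inequality.

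\textbf{Main obstacle.} There is no serious analytic obstacle: the proof is a routine chaining of the linear comparison $j_G\asymp h_{G,c}$ from Lemma~\ref{4.4}(1) with the given Hölder-type distortion of $j$. The only point requiring a little care is the bookkeeping of constants inside versus outside the exponent $a$: one must not forget that rescaling the argument of $s\mapsto s^a$ by $b_1^{-1}$ produces the factor $b_1^{-a}$, not $b_1^{-1}$, and that $\max\{s,s^a\}$ is subadditive-friendly under multiplication by constants $\geq 1$, i.e. $\max\{\lambda s, \lambda s^a\}\leq \lambda \max\{s,s^a\}$ for $\lambda\geq 1$ but one needs $\max\{\lambda s,\mu s^a\}\leq \max\{\lambda,\mu\}\max\{s,s^a\}$ in general, which is immediate. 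Collecting the constants and setting $A=A(c)$ as above completes the argument.
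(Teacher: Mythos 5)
Your proposal is correct and follows essentially the same route as the paper: chain $h_{fG,c}\leq c\, j_{fG}$, the hypothesis on $j$, and $j_G\leq \frac{2(1+c)}{c}h_{G,c}$ from Lemma~\ref{4.4}(1), then pull the constants out of the $\max$, arriving at the same constant $A=\frac{a}{2(1+c)}$ (your $\max\{b_1^{-1},b_1^{-a}\}$ equals $b_1^{-1}$ since $b_1<1$, and $C>1$ automatically, so the final caveats are unnecessary but harmless).
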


\begin{proof}
By Lemma \ref{4.4} (1) we see that
\bequu
h_{fG,c}\left(f(x),f(y)\right) &\leq & c j_{fG}\left(f(x),f(y)\right)\\
&\leq & \frac{c}{a} \max \left\{j_G(x,y), j_G(x,y)^{a}\right\}\\
&\leq & \frac{c}{a} \max \left\{\frac{2(1+c)}{c} h_{G,c}(x,y), \left(\frac{2(1+c)}{c}\right)^{a} h_{G,c}(x,y)^{a}\right\}\\
&= & \frac{1}{A} \max \left\{h_{G,c}(x,y), h_{G,c}(x,y)^{a}\right\}
\eequu
for all $x, y\in G$, where $A=\frac{a}{2(1+c)}\, . $
\end{proof}

%%%%%%%Thm4.9

\begin{thm}
Let $f:G\to fG\,$ be a $K$-quasiconformal map, where $G$ and $fG$ are proper subdomains of $\Rn\,,$  and let $G$ be a uniform domain. Then for all $c>0$ there exists $e\in (0,1)$ such that
\[
h_{fG,c}\left(f(x),f(y)\right)\leq \frac{1}{e} \max \left\{h_{G,c}(x,y), h_{G,c}(x,y)^{\alpha}\right\}
\]
where $\alpha=K_I(f)^{1/(1-n)}\, . $
\end{thm}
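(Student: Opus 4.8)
The plan is to combine the Gehring--Osgood distortion bound (Theorem~\ref{12.19}) with the two-sided comparisons between $h_{G,c}$, $j_G$ and $k_G$ obtained above, in the same spirit as the proof of Lemma~\ref{4.7}. The one structural point to keep in mind is an asymmetry between source and target: the image domain $fG$ is \emph{not} assumed to be uniform, so on that side only the universally valid chain $h_{fG,c}(f(x),f(y))\le c\,j_{fG}(f(x),f(y))\le c\,k_{fG}(f(x),f(y))$ is available (by Lemma~\ref{4.4}(1) and \eqref{qhj}); on the source side, by contrast, one exploits the uniformity of $G$ to dominate $k_G$ by $h_{G,c}$.

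First I would dispose of the trivial case $x=y$ and assume $x\ne y$. Applying Theorem~\ref{12.19} to the middle term of the chain above gives
\[
h_{fG,c}(f(x),f(y))\le c\,c_1\,\max\bigl\{k_G(x,y)^{\alpha},\,k_G(x,y)\bigr\},
\]
where $\alpha=K_I(f)^{1/(1-n)}$ (note $\alpha\in(0,1]$ since $K_I(f)\ge1$ and $1-n<0$) and $c_1$ depends only on $K_O(f)$. On the source side, Definition~\ref{uni} gives $k_G(x,y)\le U j_G(x,y)$ for some $U\ge1$, and Lemma~\ref{4.4}(1) gives $j_G(x,y)\le \frac{2(1+c)}{c}h_{G,c}(x,y)$, so that $k_G(x,y)\le M\,h_{G,c}(x,y)$ with $M=\frac{2U(1+c)}{c}$ (one may equivalently quote Corollary~\ref{4.5}). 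Since $\alpha>0$,
\[
\max\bigl\{k_G(x,y)^{\alpha},k_G(x,y)\bigr\}\le \max\{M^{\alpha},M\}\,\max\bigl\{h_{G,c}(x,y)^{\alpha},h_{G,c}(x,y)\bigr\}.
\]
Substituting, the assertion follows with $1/e:=1+c\,c_1\max\{M^{\alpha},M\}$, which satisfies $e\in(0,1)$.

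I do not expect a genuine obstacle here: once the comparison inequalities are in place the argument is bookkeeping with the function $\max\{t,t^{\alpha}\}$. The only place that requires care is the asymmetry flagged above --- one must not apply Corollary~\ref{4.5} to $fG$, which is not known to be uniform. For the same reason one cannot invoke Lemma~\ref{4.7} verbatim: inserting the intermediate bound $j_{fG}(f(x),f(y))\le C\max\{j_G(x,y)^{\alpha},j_G(x,y)\}$ into its hypothesis would force the rigid coupling between the multiplicative constant and the exponent, thereby replacing $\alpha$ by a possibly smaller exponent and producing a strictly weaker statement; carrying out the chaining by hand as above keeps the exponent equal to $\alpha$.
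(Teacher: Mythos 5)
Your argument is correct and follows essentially the same route as the paper: bound $h_{fG,c}\le c\,k_{fG}$, apply the Gehring--Osgood theorem, and then dominate $k_G$ by $h_{G,c}$ via uniformity of $G$ and Lemma~\ref{4.4}(1), arriving at the same constant (up to your harmless extra $+1$). Your remark that the upper bound on the image side must come from the universally valid chain $h_{fG,c}\le c\,j_{fG}\le c\,k_{fG}$ rather than from Corollary~\ref{4.5} applied to $fG$ is a valid point of care; the paper cites Corollary~\ref{4.5} here, but that inequality is indeed established there without using uniformity.
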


\begin{proof}
Because $G$ is uniform domain there exists constant $U>1$ such that
\[
k_G(x,y) \leq U j_G(x,y).
\]
By Corollary \ref{4.5} and Theorem \ref{12.19},
\bequu
h_{fG,c}(f(x),f(y))  &\leq & c k_{fG}(f(x),f(y)) \\
&\leq &  c c_1 \max\left\{k_G(x,y)^{\alpha},k_G(x,y)\right\}\\
&\leq & c c_1 \max\left\{(U\frac{2(1+c)}{c}h_{G,c}(x,y))^{\alpha},U\frac{2(1+c)}{c}h_{G,c}(x,y)\right\}\\
& = & \frac{1}{e} \max \left\{h_{G,c}(x,y), h_{G,c}(x,y)^{\alpha}\right\},
\eequu
for all $x, y\in G$, where $e=\frac{1}{2c_1(1+c)U}  \,.$
\end{proof}

%%%%%%%Thm4.10

\begin{thm}
Let $c>0$ and $f:G\to fG$ be a homeomorphism, where $G$ and $fG$ are proper subdomains of $\Rn\,,$ and suppose that there exists $L\geq 1$ such that for all $x, y\in G$
\[
h_{G,c}(x,y)/L\leq h_{fG,c}(f(x),f(y))\leq L h_{G,c}(x,y).
\]
Then $f$ is quasiconformal with linear dilatation $H(f)\leq L^2\, . $

\end{thm}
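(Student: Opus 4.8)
The plan is to show that the hypothesis forces $f$ to be \emph{locally} a $j$-metric quasi-isometry in a scale-invariant way, and then to read off a bound on the linear dilatation $H(f,x)=\limsup_{r\to 0}\frac{\max_{|y-x|=r}|f(y)-f(x)|}{\min_{|y-x|=r}|f(y)-f(x)|}$ at every point $x\in G$. First I would fix $x\in G$ and restrict attention to points $y$ in a small ball $\mathbb B^n(x,\lambda d(x))$ with $\lambda\in(0,1)$ small; by Lemma~\ref{4.4}(2) we have $\frac{1-\lambda}{1+\lambda}j_G(x,y)\le h_{G,c}(x,y)$, and by Lemma~\ref{4.4}(1) we have $h_{G,c}(x,y)\le c\,j_G(x,y)$, so on this neighbourhood $h_{G,c}(x,\cdot)$ and $j_G(x,\cdot)$ are comparable with constants depending only on $c$ and $\lambda$. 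The same two inequalities, applied in $fG$ at the point $f(x)$ (again on a suitably small ball around $f(x)$, which by continuity of $f$ contains $f(y)$ once $\lambda$ is small enough), let us sandwich $h_{fG,c}(f(x),f(y))$ between multiples of $j_{fG}(f(x),f(y))$. Combining with the hypothesis $h_{G,c}/L\le h_{fG,c}\circ f\le L\,h_{G,c}$, we obtain
\[
\frac{1}{C}\,j_G(x,y)\ \le\ j_{fG}(f(x),f(y))\ \le\ C\,j_G(x,y)
\]
for all $y\in\mathbb B^n(x,\lambda d(x))$, where $C=C(L,c,\lambda)$.

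Next I would convert this local $j$-comparison into a comparison of Euclidean distance ratios. For $y$ close to $x$ we have $j_G(x,y)=\log\bigl(1+\tfrac{|x-y|}{d(x)}+o(1)\cdot\tfrac{|x-y|}{d(x)}\bigr)$, i.e. $j_G(x,y)\sim |x-y|/d(x)$ as $y\to x$; similarly $j_{fG}(f(x),f(y))\sim |f(x)-f(y)|/d(f(x))$ as $y\to x$, using continuity of $f$ to guarantee $\min\{d(f(x)),d(f(y))\}\to d(f(x))$. Feeding these asymptotics into the displayed two-sided $j$-bound and letting $y\to x$ along spheres $|x-y|=r$ (with $r\to0$), the factors $d(x)$ and $d(f(x))$ are fixed, so we get, for all sufficiently small $r$,
\[
\frac{1}{C'}\,\frac{r}{d(x)}\ \le\ \frac{|f(y)-f(x)|}{d(f(x))}\ \le\ C'\,\frac{r}{d(x)},
\]
with $C'$ arbitrarily close to $C$. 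Taking the ratio of the maximum to the minimum of $|f(y)-f(x)|$ over the sphere $|x-y|=r$ kills the common factor $r\,d(f(x))/d(x)$ and yields $\max/\min\le (C')^2$, hence $H(f,x)\le (C')^2$ for every $x\in G$.

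It remains to optimise the constant to reach the sharp bound $H(f)\le L^2$. The point is that the constants $C,C'$ above degrade only through the auxiliary parameter $\lambda$ in Lemma~\ref{4.4}(2) and through the discrepancy between $\sqrt{d(x)d(y)}$ and $\min\{d(x),d(y)\}$ in Lemma~\ref{4.4}(1); but as $y\to x$ we have $d(y)/d(x)\to1$, so $\sqrt{d(x)d(y)}/\min\{d(x),d(y)\}\to1$, and likewise on the image side $\sqrt{d(f(x))d(f(y))}/\min\{\cdots\}\to1$ by continuity of $f$. Therefore in the limit $r\to0$ the only surviving distortion is the factor $L$ from the hypothesis on each side, giving $\max/\min\le L^2$ and thus $H(f)=\sup_x H(f,x)\le L^2$. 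Since a homeomorphism between domains in $\mathbb R^n$ with bounded linear dilatation is quasiconformal (this is the standard metric characterisation of quasiconformality; see \cite{v1}), $f$ is quasiconformal, completing the proof.

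The main obstacle I anticipate is the careful bookkeeping needed to show that \emph{all} the comparison constants tend to $1$ as $y\to x$, so that no spurious factor survives in the limit and the bound is exactly $L^2$ rather than some $C(L,c)$; in particular one must be attentive to the fact that the two inequalities of Lemma~\ref{4.4} involve different "radii" ($\sqrt{d(x)d(y)}$ versus $\min$), and that the image neighbourhood must be chosen after $f$ is known to map a small enough ball around $x$ into a controlled ball around $f(x)$, which uses only continuity of $f$, not yet quasiconformality.
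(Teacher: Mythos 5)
Your overall strategy coincides with the paper's: fix $x$, look at points $y$ on small spheres $|x-y|=r$, linearize the metrics as $r\to 0$, and read off the linear dilatation from the two-sided bilipschitz hypothesis; the continuity bookkeeping (that $d(f(y))/d(f(x))\to 1$ and that $f(y)$ eventually lies in a controlled ball about $f(x)$) is handled correctly, as is the final appeal to the metric characterisation of quasiconformality. The genuine problem is in your last paragraph, where you claim that the only sources of distortion in the chain $j_G\to h_{G,c}\to h_{fG,c}\to j_{fG}$ are the parameter $\lambda$ and the discrepancy between $\sqrt{d(x)d(y)}$ and $\min\{d(x),d(y)\}$, both of which vanish as $y\to x$. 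That is not so: since $h_{G,c}(x,y)=\log\bigl(1+c|x-y|/\sqrt{d(x)d(y)}\bigr)$ carries the explicit factor $c$ that $j_G$ lacks, one has $h_{G,c}(x,y)/j_G(x,y)\to c$ as $y\to x$, not $\to 1$. A literal execution of your chain, with Lemma \ref{4.4}(1) in the form $h_{G,c}\le c\,j_G$ on one side and Lemma \ref{4.4}(2) on the other, gives $C\to cL$ and hence only $H(f)\le c^2L^2$; for $c\ge 2$ (the range in which $h_{G,c}$ is actually a metric) this falls short of the asserted bound $H(f)\le L^2$.

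The sharp constant is nevertheless recoverable by your method, but for a different reason than the one you give: the factor $c$ appears identically in the source and the target and therefore cancels. Concretely, use the exact small-distance asymptotics $h_{G,c}(x,y)\sim c|x-y|/d(x)$ and $h_{fG,c}(f(x),f(y))\sim c|f(x)-f(y)|/d(f(x))$ (both metrics tend to $0$, the latter because of the upper bilipschitz bound), convert $\log(1+v)\le L\log(1+u)$ and $\log(1+v)\ge \log(1+u)/L$ into $v\le Lu(1+o(1))$ and $v\ge u(1+o(1))/L$, and observe that the $c$'s divide out of the ratio $\max/\min$. This is in effect what the paper does: it never passes through $j_G$ at all, but works with $(e^{h_{G,c}(x,y)}-1)/c=|x-y|/\sqrt{d(x)d(y)}$, the two-sided estimate $\log\bigl(1+ct/\sqrt{1+t}\bigr)\le h_{G,c}(x,z)\le\log\bigl(1+ct/\sqrt{1-t}\bigr)$ for $|x-z|=t\,d(z)$, and the elementary limit of $\bigl((1+ct/\sqrt{1-t})^L-1\bigr)\big/\bigl((1+ct/\sqrt{1+t})^{1/L}-1\bigr)$ to $L^2$ as $t\to 0$. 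So keep your structure, but replace the appeal to Lemma \ref{4.4} by the exact asymptotics of $h_{G,c}$ itself.
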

\begin{proof}
For $a, b\in G$ let
\beq\label{qc1}
m_G(a,b)=\min\{d(a), d(b)\},\quad U_G(a,b)=(e^{h_{G,c}(a,b)}-1)/c.
\eeq
Fix $z\in G$, $t\in (0,1/2)$ and $x, y\in G$ with
\beq\label{qc0}
|x-z|=|y-z|=td(z).
\eeq
It follows from the triangle inequality that the inequality  $d(f(w))\leq d(f(z))+|f(z)-f(w)|$ holds for $w\in G$ and hence
\[
U_{fG}(f(w),f(z))\cdot m_{fG}(f(w),f(z)) \leq  |f(w)-f(z)|
\]
\[
\leq  U_{fG}(f(w),f(z))\cdot m_{fG}(f(w),f(z))\sqrt{1+\frac{|f(w)-f(z)|}{m_{fG}(f(w),f(z))}}.
\]
It follows from \eqref{qc0} that
\beq\label{qc2}
h_{G,c}(x,y)\leq \log\left(1+\frac{c t d(z)}{d(z)\sqrt{1-t}}\right)=\log\left(1+c\frac{t}{\sqrt{1-t}}\right),
\eeq
\beq\label{qc3}
h_{G,c}(x,y)\geq \log\left(1+\frac{c t d(z)}{d(z)\sqrt{1+t}}\right)=\log\left(1+c\frac{t}{\sqrt{1+t}}\right).
\eeq
By \eqref{qc0}, and denoting
$$B(f(x),f(y),f(z))=\frac{m_{fG}(f(x),f(z))\sqrt{1+\frac{|f(y)-f(z)|}{m_{fG}(f(y),f(z))}}}{m_{fG}(f(y),f(z))}\,,$$
we get
\bequu
\frac{|f(x)-f(z)|}{|f(y)-f(z)|}& \leq & \frac{U_{fG}(f(x),f(z))\cdot m_{fG}(f(x),f(z)) \sqrt{1+\frac{|f(y)-f(z)|}{m_{fG}(f(y),f(z))}}}{U_{fG}(f(y),f(z))\cdot m_{fG}(f(y),f(z))}\\
&\leq & \frac{e^{L h_{G,c}(x,y)}-1}{e^{ h_{G,c}(x,y)/L}-1}B(f(x),f(y),f(z))\\
&\leq & \frac{e^{L \log\left(1+c\frac{t}{\sqrt{1-t}}\right)}-1}{e^{ \log\left(1+c\frac{t}{\sqrt{1+t}}\right)/L}-1}B(f(x),f(y),f(z))\to L^2,
\eequu
when $t\to 0\, . $
Hence we have the following bound for the linear dilatation
\[
H(f,z)=\limsup_{|x-z|=|y-z|=r\rightarrow 0^+}{\frac{|f(x)-f(z)|}{|f(y)-f(z)|}}\leq L^2 \,.
\]
\end{proof}

\bigskip
{\bf Acknowledgements.} {The research of the first author was supported by a grant received from the Finnish Academy of Science and Letters and also as a part of EUMLS project with grant agreement PIRSES $-$ GA $-$ 2011 $-$ 295164.} The second author was supported by UTUGS, The Graduate School of the University of Turku.
 The authors are indebted to the referee for valuable comments.

\small


\begin{thebibliography}{VVVV}
\bibitem[A]{a}
{\sc L. V.
 Ahlfors}: M\"obius transformations in several dimensions. Ordway Professorship Lectures in Mathematics. University of Minnesota, School of Mathematics, Minneapolis, Minn., 1981. ii+150 pp.

%=========================================================================================
\bibitem[AVV]{avv}
{\sc  G. D. Anderson, M. K. Vamanamurthy, and M. Vuorinen}: Conformal
invariants, inequalities and quasiconformal maps. J. Wiley, 1997.

%=========================================================================================
\bibitem[B]{b} {\sc A. F. Beardon}: The geometry of discrete groups. Graduate texts in
Math., Vol. 91, Springer-Verlag, New York, 1983.
%
%=========================================================================================
\bibitem[CHKV]{chkv}{\sc J. Chen, P. Hariri, R. Kl\'en, and M. Vuorinen}:
Lipschitz conditions, triangular ratio metric, and quasiconformal maps,
{Ann. Acad. Sci. Fenn. 40, 2015, 683--709,} doi:10.5186/aasfm.2015.4039,
{arXiv:1403.6582} [math.CA]
%{http://arxiv.org/abs/1403.6582}

%=========================================================================================
\bibitem[DD]{dd}{\sc M.M.
Deza and E. Deza}: Encyclopedia of distances. Third edition. Springer, Heidelberg, 2014.
xx+733 pp. ISBN: 978-3-662-44341-5; 978-3-662-44342-2

%=========================================================================================



\bibitem [GH]{gh} {\sc F. W. Gehring and  K. Hag}: The ubiquitous quasidisk. With contributions
by Ole Jacob Broch. Mathematical Surveys and Monographs, 184. American Mathematical Society,
Providence, RI, 2012. xii+171 pp.


%=========================================================================================
\bibitem[GO]{go}{\sc  F. W. Gehring and B. G. Osgood}: Uniform domains and the
quasihyperbolic metric. J. Analyse Math. 36 (1979), 50--74 (1980).


%=========================================================================================
\bibitem[GP]{gp}{\sc  F. W.
  Gehring and  B. P. Palka}: Quasiconformally homogeneous domains. J. Analyse Math. 30 (1976), 172--199.


%=========================================================================================
\bibitem[HVW]{hvw}{\sc P. Hariri, M. Vuorinen, and G. Wang}: Some remarks on the visual angle metric. Comput. Methods Funct. Theory, doi: 10.1007/s40315-015-0137-8,
{arXiv: 1410.5943} [math.MG] 12pp.
%=========================================================================================
\bibitem[H]{h}
{\sc  P. H\"{a}st\"{o}}: A new weighted metric, the relative metric I. J. Math. Anal. Appl. 274, (2002), 38--58.


%=========================================================================================


\bibitem[HIMPS]{himps}
{\sc  P. H\"ast\"o, Z. Ibragimov, D. Minda, S. Ponnusamy, and S. K. Sahoo}:
{ Isometries of some hyperbolic-type path metrics, and the hyperbolic medial axis}.
 In the tradition of Ahlfors-Bers, IV, Contemp. Math., 432, Amer. Math. Soc. (2007), 63--74.
%

%=========================================================================================
\bibitem[KLVW]{klvw}
{\sc  R. Kl\'en,  H. Lind\'en, M. Vuorinen, and G. Wang}: {The
visual angle metric and M\"obius transformations}.  Comput. Methods
Funct. Theory 14 (2014), 577--608,  doi 10.1007/s40315-014-0075-x, {arXiv:1208.2871 [math.MG]}.


%=========================================================================================
\bibitem[KRT]{krt}
{\sc R. Kl\'en, A. Rasila, and J. Talponen}: On smoothness of quasihyperbolic balls,  	 arXiv:1407.2403 [math.FA].



%=========================================================================================
\bibitem[MV]{mv}

{\sc O.
 Martio and J. V\"ais\"al\"a}:  Quasihyperbolic geodesics in convex domains II. Pure Appl. Math. Q. 7 (2011), no. 2, Special Issue: In honor of Frederick W. Gehring, Part 2, 395--409.
%=========================================================================================

\bibitem[NA]{na}
{\sc
N. Nikolov and L. Andreev}:
Estimates of the Kobayashi and quasi-hyperbolic distances, arXiv:1510.01571


\bibitem[RT]{rt}
{\sc A.  Rasila and J. Talponen}: On quasihyperbolic geodesics in Banach spaces.
Ann. Acad. Sci. Fenn. Math. 39 (2014), no. 1, 163--173.

%=========================================================================================
\bibitem[V]{v1} {\sc J. V\"ais\"al\"a}:
Lectures on n-dimensional quasiconformal mappings.
Lecture Notes in Math. 229, Springer-Verlag, Berlin, 1971.

%========================================================================================

\bibitem[Vu1]{vu0} {\sc  M. Vuorinen}:  Conformal invariants and quasiregular mappings. - J. Anal. Math. 45 (1985), 69-115.
%=========================================================================================

\bibitem[Vu2]{vu}
{\sc  M. Vuorinen}: Conformal geometry and quasiregular mappings. Lecture
Notes in Math. 1319, Springer-Verlag, Berlin, 1988.
%========================================================================================
\bibitem[Vu3]{vu1}
{\sc  M. Vuorinen}: Geometry of Metrics.
Proc. ICM2010 Satellite Conf. International Workshop on Harmonic
and Quasiconformal Mappings (HMQ2010), eds. D. Minda, S. Ponnusamy, N.
Shanmugalingam, J. Analysis 18 (2010), 399--424, ISSN 0971-3611, arXiv:1101.4293 [math.CV].
% http://arxiv.org/abs/1101.4293,

\end{thebibliography}
\end{document}